   \newcommand{\zit}[1]{(\ref{#1})}
     \def\dint{\int\hspace{-5pt}\int}
\newlength{\temp@wc@width}
\newlength{\temp@wc@height}
\newcommand{\widecheck}[1]{%
\setlength{\temp@wc@width}{\widthof{$#1$}}%
\setlength{\temp@wc@height}{\heightof{$#1$}}%
#1\hspace{-\temp@wc@width}%
\raisebox{\temp@wc@height+2pt}[\heightof{$\widehat{#1}$}]%
{\rotatebox[origin=c]{180}{\vbox to 0pt{\hbox{$\widehat{\hphantom{#1}}$}}}}%
}
 \numberwithin{equation}{section}
 \newtheorem{theorem}{Theorem}[section]
 \newtheorem{lemma}[theorem]{Lemma}
 \newtheorem{remark}[theorem]{Remark}
 \newtheorem{example}[theorem]{Example}
 \newtheorem{definition}[theorem] {Definition}
 \newtheorem{corollary}[theorem]{Corollary}
 \newtheorem{proposition}[theorem]{Proposition}
 \theoremstyle{definition}
 \newtheorem{exemple}[theorem]{Example}
 \def\dist{\operatorname{dist}}
 \def\T{ \mathbb T}
 \def\R{ \mathbb R}
 \def\D{{ \mathbb D}}
 \def\C{{ \mathbb C}}
 \def\Z{{ \mathbb Z}}
 \def\N{{ \mathbb N}}
 \def\e{\varepsilon}
 \def\dbar{\ov\partial}
 \def\sp {\quad}
 \def\ssc{\scriptscriptstyle}
 \def\union{\cup}
 \def\Union{\bigcup}
 \def\inter{\cap}
 \def\ov{\overline}
 \def\ss{\subseteq}
 \def\emp{\emptyset}
 \def\buildrel#1_#2^#3{\mathrel{\mathop{\kern 0pt#1}\limits_{#2}^{#3}}}
 \def\ssi{\Longleftrightarrow}
 \def\imp{\Longrightarrow}
\begin{document}
  
  \title [Algebra generators]{Holomorphic injective extensions of functions in $P(K)$ 
 and algebra generators}

  \author{Raymond Mortini}
  
\address{
Universit\'{e} de Lorraine\\
 D\'{e}partement de Math\'{e}matiques et Institut \'Elie Cartan de Lorraine,  UMR 7502\\
 Ile du Saulcy\\
 F-57045 Metz, France} 
 \email{raymond.mortini@univ-lorraine.fr}

 \subjclass{Primary 46J15, Secondary 30H50; 30E10}
\keywords{}

  \begin{abstract}
 We present  necessary and sufficient conditions on planar compacta $K$ and  continuous
 functions $f$ on $K$ in order that $f$ generates the algebras $P(K), R(K), A(K)$ or $C(K)$.
 We also unveil quite surprisingly simple examples  of non-polynomial convex 
 compacta $K\ss\C$ and $f\in P(K)$
 with the property that $f \in P(K)$ is a homeomorphism, but for which $f^{-1}\notin P(f(K))$.
 As a consequence, such functions do not admit injective holomorphic  extensions to the
 interior of the  polynomial convex hull $\widehat K$.
On the other hand,  it will be shown that the restriction $f^*|_G$ of the  Gelfand-transform $f^*$
 of an injective function $f\in P(K)$
{\bf is} injective on every regular, bounded  complementary component  $G$ of $K$.
 A necessary and sufficient condition in terms of the behaviour of
$f$ on the outer boundary of $K$  is given in order $f$  admits a holomorphic  injective extension
to $\widehat K$.
We also include some results on the existence of continuous logarithms on punctured compacta 
containing the origin in their boundary.
  \end{abstract}

   \maketitle

 \centerline {\small\the\day.\the \month.\the\year} \medskip
 
 \section*{Introduction}
 
 Let $K$ be  a compact set in the complex plane $\C$. As usual, $P(K)$ denotes
 the set of complex-valued continuous functions on $K$ that can  be uniformly approximated 
 by polynomials. Endowed with the usual algebraic operations and the supremum norm, $P(K)$
 is a uniformly closed subalgebra of $C(K)$. By definition, the monomial $z$ is a generator for
 $P(K)$. We recall  the following definition:

  \begin{definition}
 If $A$ is a commutative  unital Banach algebra and $S$ a subset of $A$,
then the smallest closed subalgebra of $A$ containing $S$ is denoted by $[S]_{{\rm alg}}$.
We also say that $[S]_{{\rm alg}}$ is the algebra generated by $S$. 
\end{definition}

Note that $[S]_{{\rm alg}}$ is the norm-closure of the set of all
polynomials of the form $\sum a_\iota f_1^{n_1}\dots f_j^{n_j}$, where  $f_k\in S$, 
$\iota=(n_1,\dots,n_j)\in \N^j$ and $j\in \N^*$.

We are interested in the following question:  which functions  are generators for $P(K)$?
We also consider the associated algebras 
$$\mbox{$A(K)=\{f\in C(K): f $ holomorphic in the interior $K^\circ$ of $K\}$},$$
and $R(K)$, the uniform closure of the set  $R_0(K)$ of rational functions without poles on $K$.

We  present  in Section 1, which represents the motivational part of this paper,
 the answer to this question.  
 The description in the case of the algebra $P(K)$ leads to the following
problem: if $f\in P(K)$ is a homeomorphism, is the unique holomorphic extension $f^*$ 
of $f$ to the polynomial convex hull  $\widehat K$ of $K$  injective?

In the case where $K$ is the unit circle $\T$, a classical result, known under the name
of the Darboux-Picard theorem (see \cite[p.~310]{bu})  
tells us that $f^*$ actually is injective on the closed unit disk $\mathbb D$.  Generalizations
in various directions had been established  (see \cite{bu}).  The 
general situation, however, does not seem to have been solved. 
We give a nice  example showing that the answer to the preceding question is negative.
Our main goal  then will be  achieved in Section 2, namely a proof of  the following  result:
if $f\in P(K)$ then the Gelfand transform, $f^*$, of $f$  is injective 
on $\widehat K$ if and only if $f$ maps
 the outer boundary of $K$  onto the outer boundary of $f(K)$.
Our method involves Eilenberg's  representation theorem  for zero-free functions
on compacta as well as a homotopic variant of Rouch\'e's theorem. 
As a corollary we obtain that  for {\it every} injective function  $f\in P(K)$, 
the restriction  $f^*|_G$ of $f^*$ to a  regular hole $G$ of $K$ is  injective.
Here a hole $G$ of $K$ is called {\it regular}  if $G$ is the only hole of its boundary.
In particular, if $K$ has a connected complement and a connected interior, then
$f^*$ is injective on $K$ if and only if  $f\in P(\partial K)$ is injective.

In Section 3 we deal with a feature not covered by Eilenberg's theorem: under which conditions
on $K$ with $0\in \partial K$ does there exist a continuous branch of the logarithm on $K\setminus \{0\}$? (In Eilenberg's theorem $0$ belongs to the complement of $K$). 

\section{Algebra Generators}

  \begin{theorem}\label{ck}
 Let $K\ss\C$ be compact and $\varphi\in C(K,\C)$. The following assertions are equivalent:
 \begin{enumerate}
 \item [(1)] $\varphi$ is a generator for $C(K,\C)$; that is $C(K,\C)=[\varphi]_{{\rm alg}}$;
 \item[(2)] $\varphi$ is a homeomorphism  of $K$ onto $\varphi(K)$, $K^ \circ=\emp$ and
 $\C\setminus K$ is connected.
\end{enumerate}
\end{theorem}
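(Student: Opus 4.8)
The plan is to reduce the equivalence to Lavrentiev's approximation theorem: for a planar compactum $L$ one has $P(L)=C(L)$ if and only if $L^\circ=\emp$ and $\C\setminus L$ is connected.

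First I would extract injectivity of $\varphi$ from (1). A polynomial $p(\varphi)$ is constant on each fibre $\varphi^{-1}(w)$, and this property survives uniform limits, so every $f\in[\varphi]_{\rm alg}$ satisfies $f(a)=f(b)$ whenever $\varphi(a)=\varphi(b)$. If $\varphi$ identified two distinct points, no function in $[\varphi]_{\rm alg}$ could separate them, contradicting that $C(K)$ separates the points of the metric space $K$; hence $\varphi$ is injective, and by compactness a homeomorphism of $K$ onto $L:=\varphi(K)$. Next, the composition map $g\mapsto g\circ\varphi$ is an isometric unital algebra isomorphism of $C(L)$ onto $C(K)$ which sends polynomials on $L$ to polynomials in $\varphi$; passing to uniform closures it carries $P(L)$ onto $[\varphi]_{\rm alg}$ and $C(L)$ onto $C(K)$. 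Therefore (1) is equivalent to ``$\varphi$ is a homeomorphism and $P(L)=C(L)$'', and Lavrentiev's theorem rewrites the latter as ``$\varphi$ a homeomorphism, $L^\circ=\emp$ and $\C\setminus L$ connected''.

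The crux is then to transfer the two geometric conditions between $L$ and $K$; both are in fact topological invariants of the abstract compactum and so are preserved by $\varphi$. For the interior, if $K^\circ$ contained a disk $U$, then $\varphi|_U$ would be a continuous injection of an open subset of $\R^2$ into $\R^2$, so by Brouwer's invariance of domain $\varphi(U)$ is open and $L^\circ\neq\emp$; applying this to $\varphi^{-1}$ as well yields $K^\circ=\emp\iff L^\circ=\emp$. For the complement, I would pass to the sphere $S^2=\C\cup\{\infty\}$ and invoke Alexander duality, which identifies the reduced homology $\widetilde H_0(S^2\setminus K)$ with $\check H^1(K)$; hence the number of complementary components of $K$ in $S^2$, which coincides with that in $\C$ because deleting the interior point $\infty$ from the unbounded component leaves it connected, is determined by the topology of $K$ alone. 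Thus $\C\setminus K$ is connected if and only if $\C\setminus L$ is, and combining the two transfers with the previous paragraph closes the equivalence $(1)\iff(2)$.

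The main obstacle is exactly this transfer step: a homeomorphism of the compactum $K$ onto $L$ need not extend to a homeomorphism of the plane, so neither the vanishing of the interior nor the connectedness of the complement is formally automatic, and each must be anchored in a genuinely topological invariance result --- invariance of domain for the interior and Alexander duality for the complementary components.
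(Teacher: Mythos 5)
Your proposal is correct and follows essentially the same route as the paper: reduce (1) to the statement $P(\varphi(K))=C(\varphi(K))$ via the isometric isomorphism induced by the homeomorphism $\varphi$, apply Lavrentiev's theorem, and then transfer the two geometric conditions back to $K$ by topological invariance. The only difference is cosmetic: where you invoke invariance of domain and Alexander duality explicitly, the paper simply cites Burckel for the invariance of the number of complementary components under homeomorphisms of planar compacta.
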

\begin{proof}
It is clear that every generator for  $C(K,\C)$ is point separating. Hence,
$\varphi$ must be a homeomorphism of $K$ onto its image.
Let $f\in C(K,\C)$. We first show that $f\in [\varphi]_{{\rm alg}}$ if and only if
$f\circ \varphi^{-1}\in P(\varphi(K))$.  In fact,
$f\in [\varphi]_{{\rm alg}}$ if and only if $p_n(\varphi)\to f$ uniformly on $K$ for some sequence of polynomials
 $p_n\in \C[z]$. But
 $$\max_{z\in K}|p_n(\varphi(z))-f(z)|\to 0\ssi 
 \max_{w\in \varphi (K)} |p_n(w)-f(\varphi^{-1}(w))|\to 0.$$

This in turn is equivalent to $f\circ \varphi^{-1}\in P(\varphi(K))$. 
Next we observe that every $h\in C(\varphi(K),\C)$ writes as $f\circ \varphi^{-1}$ for some
$f\in C(K,\C)$; just put $f=h\circ\varphi$. We conclude that the assumption
 $C(K,\C)=[\varphi]_{{\rm alg}}$ is equivalent to the assumption
 $C(\varphi (K),\C)=P(\varphi(K))$, whenever $\varphi$ is an homeomorphism.
  By Lavrentiev's theorem \cite[p. 192]{b},
 this happens if and only if $\varphi (K) ^\circ=\emp$ and $\C\setminus\varphi (K)$ is connected.
 Now  $\varphi  (K) ^\circ=\emp$ if and only if $K^\circ=\emp$. Moreover,  the number of connected components of the complement of
 a compact set in $\C$ is invariant under homeomorphisms (see \cite[p. 99]{bu}).
  Hence condition (2) is
 necessary and sufficient for $C(K,\C)$ to be singly generated by $\varphi$.
\end{proof}

 \begin{remark}
  Let  $K\ss\R$ be compact and $\varphi\in C(K,\R)$. The following assertions are equivalent:
 \begin{enumerate}
 \item [(1)] $\varphi$ is a generator for $C(K,\R)$; that is $C(K,\R)=[\varphi]_{{\rm alg}}$;
 \item[(2)] $\varphi$ is a homeomorphism  of $K$ onto $\varphi(K)$. 
  \end{enumerate}
\end{remark}
\begin{proof}
As above, if $\varphi$ is a homeomorphism of $K$ onto its image,  the assumption
 $C(K,\R)=[\varphi]_{{\rm alg}}$ is equivalent to the assumption 
 $C(\varphi (K),\R)=P_\R(\varphi(K))$. \footnote{ This is, per definition, the uniform closure
 of the set of {\it real} polynomials on $\varphi(K)$.}
  This is always true, though, by Weierstrass' approximation theorem.
\end{proof}

 \begin{theorem}\label{ak}
 Let $K\ss\C$ be compact and $\varphi\in A(K)$. The following assertions are equivalent:
 \begin{enumerate}
 \item [(1)] $\varphi$ is a generator for $A(K)$; that is $A(K)=[\varphi]_{{\rm alg}}$;
 \item[(2)] $\varphi$ is a homeomorphism  of $K$ onto $\varphi(K)$ and
 $\C\setminus K$ is connected.
\end{enumerate}
\end{theorem}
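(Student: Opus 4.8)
The plan is to run the same change-of-variables argument used for Theorem \ref{ck}, but to land on the comparison of $P(L)$ with $A(L)$ rather than with $C(L)$. The whole proof should factor through the algebra isomorphism induced by $\varphi$, so that the analytic content reduces to a single well-known dichotomy for $A(L)$ versus $P(L)$.

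First I would note, exactly as in Theorem \ref{ck}, that $A(K)$ contains the coordinate function $z$ and hence separates the points of $K$; therefore any generator must be point-separating, so (1) forces $\varphi$ to be an injective continuous map of the compact set $K$, i.e. a homeomorphism onto $\varphi(K)$. This settles the homeomorphism half of (2) and lets me assume $\varphi$ is a homeomorphism from here on. Let $\Phi\colon C(\varphi(K))\to C(K)$, $h\mapsto h\circ\varphi$, be the induced isometric algebra isomorphism, with inverse $g\mapsto g\circ\varphi^{-1}$. Since $\Phi$ sends the coordinate function $w$ to $\varphi$, the reasoning already given in the proof of Theorem \ref{ck} shows $\Phi\big(P(\varphi(K))\big)=[\varphi]_{{\rm alg}}$.

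The genuinely new point, and the step I expect to be the main obstacle, is to prove $\Phi\big(A(\varphi(K))\big)=A(K)$, i.e. that $f\in A(K)\iff f\circ\varphi^{-1}\in A(\varphi(K))$. This is where the hypothesis $\varphi\in A(K)$ is used: $\varphi$ is holomorphic and injective on the open set $K^\circ$, hence a biholomorphism of $K^\circ$ onto the open set $\varphi(K^\circ)$ with holomorphic inverse. By invariance of domain (applied to $\varphi$ and to $\varphi^{-1}$ on interiors) one gets $\varphi(K^\circ)=\varphi(K)^\circ$, so $\varphi$ restricts to a biholomorphism $K^\circ\to\varphi(K)^\circ$. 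Composing with this biholomorphism shows that $f$ is holomorphic on $K^\circ$ exactly when $f\circ\varphi^{-1}$ is holomorphic on $\varphi(K)^\circ$, which gives the desired identity $\Phi\big(A(\varphi(K))\big)=A(K)$.

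Because $\Phi$ is a bijection carrying $P(\varphi(K))$ onto $[\varphi]_{{\rm alg}}$ and $A(\varphi(K))$ onto $A(K)$, the equality $A(K)=[\varphi]_{{\rm alg}}$ is equivalent to $A(\varphi(K))=P(\varphi(K))$. I would then invoke the classical fact that, for a compact $L\ss\C$, one has $P(L)=A(L)$ iff $\C\setminus L$ is connected: Mergelyan's theorem gives sufficiency, and for necessity a bounded complementary component $G$ with $a\in G$ furnishes $1/(z-a)\in A(L)\setminus P(L)$ (every element of $P(L)$ extends holomorphically across $G\ss\widehat{L}^{\,\circ}$ by the maximum principle, whereas $1/(z-a)$ cannot). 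Finally, since the number of complementary components of a planar compactum is invariant under homeomorphisms (as recalled in Theorem \ref{ck}), $\C\setminus\varphi(K)$ is connected iff $\C\setminus K$ is connected. Chaining these equivalences yields (1) $\iff$ (2).
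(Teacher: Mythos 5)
Your proof is correct and takes essentially the same route as the paper's: reduce $A(K)=[\varphi]_{\rm alg}$ to $A(\varphi(K))=P(\varphi(K))$ via the isomorphism $h\mapsto h\circ\varphi$ and then invoke Mergelyan together with the homeomorphism-invariance of the number of complementary components. You merely make explicit two points the paper leaves terse, namely the invariance-of-domain argument showing $\varphi^{-1}\in A(\varphi(K))$ and the $1/(z-a)$ argument for the necessity of connected complement.
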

\begin{proof}
As in the previous theorem, we obtain that the assumption
 $A(K)=[\varphi]_{{\rm alg}}$ is equivalent to the assumption 
 $A(\varphi (K))=P(\varphi(K))$ whenever $\varphi\in A(K)$ is an homeomorphism.
 Note that $\varphi^{-1}\in A(\varphi(K))$.
 By Mergelyan's theorem \cite{rud1},
 this happens if and only if  $\C\setminus K$ is connected.
 \end{proof}
 
 The proof  of the corresponding  result for $R(K)$ and $P(K)$ needs an additional argument:
 
 \begin{lemma}\label{ratio}
 Let $K\ss\C$ be compact and $\varphi\in C(K)$. The following assertions hold: 
 \begin{enumerate}
 \item [(1)]
If $\varphi\in R(K)$, then $h\in R(\varphi(K))$ implies that $f:=h\circ \varphi\in R(K)$.
\item [(2)]  
If $\varphi\in P(K)$, then $h\in P(\varphi(K))$ implies that $f:=h\circ \varphi\in P(K)$.
 \end{enumerate}
\end{lemma}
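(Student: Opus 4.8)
The plan is to reduce both statements to the change-of-variables identity already used in the proof of Theorem~\ref{ck}: for every function $u$ on $\varphi(K)$,
$$\max_{z\in K}|u(\varphi(z))|=\max_{w\in\varphi(K)}|u(w)|,$$
because $\varphi(z)$ ranges exactly over $\varphi(K)$ as $z$ ranges over $K$. Applied to the error term of a uniform approximation, this converts approximation on $\varphi(K)$ into approximation on $K$, provided the approximating building blocks, composed with $\varphi$, already lie in the target algebra.

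For assertion (2) this is immediate. I would choose polynomials $p_n$ with $p_n\to h$ uniformly on $\varphi(K)$. Since $\varphi\in P(K)$ and $P(K)$ is a uniformly closed subalgebra of $C(K)$ containing the constants, each $p_n(\varphi)$---a finite polynomial combination of $\varphi$---lies in $P(K)$. Applying the identity to $u=p_n-h$ shows $p_n(\varphi)\to h\circ\varphi$ uniformly on $K$, and closedness of $P(K)$ gives $f=h\circ\varphi\in P(K)$.

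For assertion (1) the scheme is the same, but the approximating functions now involve a quotient, and this is where the actual content sits. I would take $r_n=P_n/Q_n\in R_0(\varphi(K))$ with $r_n\to h$ uniformly on $\varphi(K)$, so that $Q_n$ has no zeros on $\varphi(K)$. Then $r_n\circ\varphi=P_n(\varphi)/Q_n(\varphi)$, and $P_n(\varphi),Q_n(\varphi)\in R(K)$ since $R(K)$ is a closed algebra containing $\varphi$. The delicate step is to invert $Q_n(\varphi)$ inside $R(K)$: as $Q_n$ is zero-free on $\varphi(K)$, the element $Q_n(\varphi)$ is zero-free on $K$, and I would invoke the classical identification of the maximal ideal space of $R(K)$ with $K$ itself---equivalently, the fact that a zero-free element of $R(K)$ is invertible in $R(K)$---to conclude $1/Q_n(\varphi)\in R(K)$. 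Hence $r_n\circ\varphi\in R(K)$, and the identity applied to $u=r_n-h$, together with the closedness of $R(K)$, yields $f=h\circ\varphi\in R(K)$.

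I expect the invertibility statement for $R(K)$ to be the only non-formal ingredient; everything else is bookkeeping with the algebra structure and the sup-norm identity. I would therefore single out and cite the identification of the maximal ideal space of $R(K)$ with $K$ (standard in the theory of uniform algebras) as the key fact behind (1), observing that (2) requires no analogue, since no division occurs there.
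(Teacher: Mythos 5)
Your proof is correct, but for part (1) it takes a genuinely different route from the paper. The paper never inverts anything inside $R(K)$: it picks rational approximants $\varphi_j\in R_0(K)$ of $\varphi$ itself and forms the honest rational functions $r_n\circ\varphi_{j_n}$, checking (via an $\varepsilon$-neighborhood $U_n$ of $\varphi(K)$ avoiding the poles of $r_n$, plus uniform continuity of $r_n$ on $\ov{U_n}$) that these have no poles on $K$ and converge uniformly to $h\circ\varphi$. You instead write $r_n=P_n/Q_n$ with $Q_n$ zero-free on $\varphi(K)$, observe $P_n(\varphi),Q_n(\varphi)\in R(K)$, and invert $Q_n(\varphi)$ by appealing to the identification of the maximal ideal space of $R(K)$ with $K$. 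That identification is indeed standard and its proof is short (a character $\phi$ with $\phi(z)=a\notin K$ would annihilate the invertible element $z-a$), so your argument is sound; it is also shorter than the paper's, at the cost of importing Gelfand theory where the paper stays entirely at the level of uniform continuity and elementary manipulation of rational functions. For part (2) your observation that $p_n(\varphi)\in P(K)$ directly, because $P(K)$ is a closed algebra containing $\varphi$, is essentially the paper's argument in streamlined form (the paper instead composes $p_n$ with polynomial approximants of $\varphi$), and the sup-norm change-of-variables identity you use requires only that $\varphi$ maps $K$ onto $\varphi(K)$, which is automatic. No gaps.
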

\begin{proof}
(1)  Let $(r_n(w))$ denote a sequence of rational functions without poles on $\varphi(K)$
converging uniformly on  $\varphi(K)$ to $h(w)$. Then 
\begin{equation}\label{erste}
\max_{z\in K}|r_n(\varphi(z))-h(\varphi(z))|\to 0.
\end{equation}
Next, let  $(\varphi_n(z))$ be  a sequence of rational functions without poles on $K$ 
converging uniformly on $K$ to $\varphi(z)$. We claim that the following assertions hold:
\begin{enumerate}
\item[i)] \label{rat1}
 For every $n$ there exists $j_n>n$ such that $r_n\circ \varphi_{j_n}$ is a rational function 
without poles on $K$.
\item[ii)] \label{rat2}
$(r_n\circ\varphi_{j_n})$ converges uniformly on $K$ to  $h\circ\varphi$.
\end{enumerate}
In fact, since it is obvious that $r_n\circ\varphi_j$ is a rational function again,  
it remains to prove for i) that  $j\geq n$ can be chosen so that $r_n\circ\varphi_j$ has no poles on $K$. 
To see this, we observe that $r_n$ has no poles in the closure of an open  
neighborhood $U_n$ of $\varphi(K)$. 
Let $\e_n=\dist(\varphi(K), \C\setminus U_n)$. The compactness of $\varphi(K)$
 implies that $\e_n>0$.
Since $||\varphi_j-\varphi||_K\to 0$, $\dist(\varphi_j(z), \varphi(K))<\e_n/2$ for every $z\in K$ 
and $j\geq j^*_n>n$. Thus, for all $z\in K$ and  $j\geq j^*_n$,  $\varphi_{j^*_n}(z)\in U_n$.
 Hence $r_n\circ \varphi_{j}$
has no poles on $K$ when $j\geq j^*_n$.  This gives i).

ii)  Fix $n$. Since $r_n$ is uniformly continuous on  $\ov U_n$, we may 
choose $j_n\geq j^*_n$ so big that
\begin{equation}\label{zweite}
||r_n\circ \varphi_{j_n}-r_n\circ\varphi||_K< 1/n.
\end{equation}

Then ii)  is a consequence of  the following  estimations:
\begin{eqnarray*}
|r_n\circ \varphi_{j_n}-h\circ \varphi|&\leq & |r_n\circ\varphi_{j_n}- r_n\circ \varphi|
+|r_n\circ \varphi- h\circ \varphi|\\
&\buildrel\leq_{\zit{erste}}^{\zit{zweite}} & 1/n+\e/2<\e
\end{eqnarray*}
for $n\geq n_0$.
We conclude that $h\circ\varphi\in R(K)$.\\
(2) This works as in part ii) above, where rational functions are replaced by polynomials.
Note that i)  is irrelevant here.
\end{proof}

\begin{theorem}
 Let $K\ss\C$ be compact and $\varphi\in R(K)$. The following assertions are equivalent:
 \begin{enumerate}
 \item [(1)] $\varphi$ is a generator for $R(K)$; that is $R(K)=[\varphi]_{{\rm alg}}$;
 \item[(2)] $\varphi$ is a homeomorphism  of $K$ onto $\varphi(K)$ and
 $\C\setminus K$ is connected.
\end{enumerate}
\end{theorem}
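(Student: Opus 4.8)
The plan is to imitate the proofs of Theorems~\ref{ck} and~\ref{ak}: transport the whole problem to the image compactum $\varphi(K)$ via the homeomorphism $\varphi$, reduce the generating property to the coincidence $R(\varphi(K))=P(\varphi(K))$, and then govern that coincidence by the connectedness of the complement. First I would record the necessity of $\varphi$ being a homeomorphism: since $R(K)$ contains the coordinate function $z$ it separates the points of $K$, so any single generator $\varphi$ must be injective and, being continuous on the compactum $K$, a homeomorphism onto $\varphi(K)$. From now on $\varphi$ is assumed to be such a homeomorphism, and I write $T\colon C(K)\to C(\varphi(K))$, $T(f)=f\circ\varphi^{-1}$, for the induced isometric algebra isomorphism; note $T(\varphi)=z$, the coordinate function on $\varphi(K)$, and $[\varphi]_{\rm alg}\ss R(K)$ automatically because $\varphi\in R(K)$.

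For the forward implication, suppose $R(K)=[\varphi]_{\rm alg}$. Applying $T$ gives $T(R(K))=T([\varphi]_{\rm alg})=[z]_{\rm alg}=P(\varphi(K))$. On the other hand, Lemma~\ref{ratio}(1) applied to $\varphi\in R(K)$ shows $h\circ\varphi\in R(K)$ for every $h\in R(\varphi(K))$, i.e. $R(\varphi(K))\ss T(R(K))$. Combining this with the trivial inclusion $P(\varphi(K))\ss R(\varphi(K))$ forces $R(\varphi(K))=P(\varphi(K))$. By the classical fact that $R(L)=P(L)$ holds exactly when $\C\setminus L$ is connected (a failure being witnessed by $1/(z-a)$ for $a$ in a bounded complementary component), $\C\setminus\varphi(K)$ is connected; and since the number of complementary components is a homeomorphism invariant \cite[p.~99]{bu}, so is $\C\setminus K$.

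The converse is the delicate part. Assume $\varphi$ is a homeomorphism and $\C\setminus K$ connected, hence $\C\setminus\varphi(K)$ connected as well. The naive attempt, writing $f=(f\circ\varphi^{-1})\circ\varphi$ and invoking Lemma~\ref{ratio}(2), requires $\varphi^{-1}\in P(\varphi(K))$, and this is precisely what can fail for general $K$ (the counterexamples announced in the introduction); the connectedness hypothesis is what rescues it. Since $R(K)\ss A(K)$, the homeomorphism $\varphi$ is holomorphic on $K^\circ$; by invariance of domain $\varphi(K^\circ)=\varphi(K)^\circ$, and a holomorphic injection has a holomorphic inverse, so $\varphi^{-1}$ is holomorphic on $\varphi(K)^\circ$ and continuous on $\varphi(K)$, i.e. $\varphi^{-1}\in A(\varphi(K))$. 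Because $\C\setminus\varphi(K)$ is connected, Mergelyan's theorem \cite{rud1} gives $A(\varphi(K))=P(\varphi(K))$, whence $\varphi^{-1}\in P(\varphi(K))$. Now Lemma~\ref{ratio}(2), read with $\varphi^{-1}\in P(\varphi(K))$ in the role of the generator (so that $\varphi^{-1}(\varphi(K))=K$), shows $f\circ\varphi^{-1}\in P(\varphi(K))$ for every $f\in P(K)=R(K)$, the last equality again being a consequence of $\C\setminus K$ connected. Transported back by $T$ this says $R(K)\ss[\varphi]_{\rm alg}$, and equality follows.

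The main obstacle is the single step $\varphi^{-1}\in P(\varphi(K))$ in the converse: the composition lemmas only push functions forward through $\varphi$, so to run them backwards one must know that $\varphi^{-1}$ itself is approximable, and this genuinely uses both the holomorphy of $\varphi$ on the interior (to place $\varphi^{-1}$ in $A(\varphi(K))$) and Mergelyan's theorem under the connectedness hypothesis. Everything else is bookkeeping carried across the isomorphism $T$.
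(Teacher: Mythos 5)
Your proof is correct and follows essentially the same route as the paper's: necessity is obtained by transporting everything through $f\mapsto f\circ\varphi^{-1}$, using Lemma~\ref{ratio}(1) to reduce the generating hypothesis to $P(\varphi(K))=R(\varphi(K))$ and hence to connectedness of the complement, and sufficiency rests on Mergelyan's theorem. The only cosmetic difference is that you apply Mergelyan to $\varphi^{-1}\in A(\varphi(K))$ and then invoke Lemma~\ref{ratio}(2), whereas the paper applies it directly to $f\circ\varphi^{-1}\in A(\varphi(K))$.
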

\begin{proof}
As usual, we see that for homeomorphic maps $\varphi$ and 
$f\in R(K)$  one has $f\in [\varphi]_{{\rm alg}}$ if and only if
$f\circ \varphi^{-1}\in P(\varphi(K))$. 

(1) $\imp$ (2)~  Let $h\in R(\varphi(K))$. Since, by assumption, $\varphi\in R(K)$, we deduce 
from Lemma \ref{ratio} that $f:=h\circ \varphi\in R(K)$. 
Hence $h=f\circ \varphi^{-1}\in P(\varphi(K))$ if $\varphi$ is a generator for $R(K)$. Thus $P(\varphi(K))=R(\varphi(K))$.
By Runge's theorem, $\varphi(K)$ has connected complement, and so the same is true for $K$.

(2) $\imp$ (1)~ If $K$ (and so $\varphi(K)$),  has connected complement, then by Mergelyan's
Theorem, see  \cite{rud1}, $P(\varphi(K))=R(\varphi(K))=A(\varphi(K))$.   Consider any $f\in R(K)$
and let $h:=f\circ\varphi^{-1}$. Then $h\in A(\varphi(K))$. Hence 
$f\circ \varphi^{-1}=h\in P(\varphi(K))$.
Thus $f\in  [\varphi]_{{\rm alg}}$. Consequently, $R(K)= [\varphi]_{{\rm alg}}$.
\end{proof}

\begin{corollary}
If $A=C(K),  A(K)$ or $R(K)$ is singly generated,  then $K$ is polynomially convex and
 $A=P(K)$.
\end{corollary}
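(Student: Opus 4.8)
The plan is to show that if one of the algebras $A=C(K)$, $A(K)$ or $R(K)$ is singly generated, say by $\varphi$, then in each case we can reduce to the situation where $\varphi$ is a homeomorphism of $K$ onto $\varphi(K)$ with $\C\setminus K$ connected, and then extract the conclusion. First I would invoke the relevant characterization theorem already proved: for $A=C(K)$ this is Theorem~\ref{ck}, for $A=A(K)$ this is Theorem~\ref{ak}, and for $R(K)$ the theorem immediately preceding. Each of these tells us that single generation of $A$ by some $\varphi\in A$ forces $\C\setminus K$ to be connected (and in the $C(K)$ case additionally $K^\circ=\emp$). Since a compact set $K\ss\C$ is polynomially convex precisely when $\C\setminus K$ is connected, this immediately yields that $K$ is polynomially convex.

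Having established polynomial convexity, the remaining claim is the identity $A=P(K)$. Here I would argue that connectedness of $\C\setminus K$ makes all the relevant approximation classes collapse. By Mergelyan's theorem (as cited via \cite{rud1}), whenever $\C\setminus K$ is connected one has $P(K)=R(K)=A(K)$. This already disposes of the cases $A=R(K)$ and $A=A(K)$: in both we get $A=P(K)$ directly. For the case $A=C(K)$, Theorem~\ref{ck}(2) additionally supplies $K^\circ=\emp$, so $K$ has empty interior and connected complement; by Lavrentiev's theorem (cited via \cite{b}) this forces $C(K)=P(K)$, and combined with the hypothesis $C(K)=[\varphi]_{\rm alg}$ we again obtain $A=P(K)$.

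The argument is essentially a bookkeeping assembly of the three preceding theorems together with the standard approximation-theoretic facts (Mergelyan, Lavrentiev, and the topological characterization of polynomial convexity in the plane). The one point that deserves care—and which I expect to be the only mild obstacle—is verifying that in each of the three cases the generation hypothesis really does deliver \emph{both} polynomial convexity \emph{and} the coincidence with $P(K)$ simultaneously, rather than one of them only. This is handled by noting that the characterization theorems phrase the generating condition via the equality of the relevant function algebra on $\varphi(K)$ with $P(\varphi(K))$, so that after transporting back along the homeomorphism $\varphi$ (which preserves the number of complementary components, as recorded in the proof of Theorem~\ref{ck}) the two conclusions are obtained together. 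Thus the corollary follows without any new computation.
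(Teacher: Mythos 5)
Your proof is correct and follows essentially the same route as the paper's own: the three preceding characterization theorems force $\C\setminus K$ to be connected (hence $K$ polynomially convex), Mergelyan's theorem then collapses $P(K)=R(K)=A(K)$, and the $C(K)$ case is finished via $K^\circ=\emp$ and Lavrentiev. No gap; nothing further is needed.
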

\begin{proof}
This follows from  the previous Theorems which imply that under the given assumption,
$K$  is polynomially convex. Hence, by Mergelyan's Theorem, $P(K)=R(K)=A(K)$,
and in the remaining case, the additional condition $K^\circ=\emp$ implies
that $C(K)=P(K)$.
\end{proof}

\begin{theorem}
 Let $K\ss\C$ be compact and $\varphi\in P(K)$. The following assertions are equivalent:
 \begin{enumerate}
 \item [(1)] $\varphi$ is a generator for $P(K)$; that is $P(K)=[\varphi]_{{\rm alg}}$;
 \item[(2)] $\varphi$ is a homeomorphism  of $K$ onto $\varphi(K)$ and 
 $\varphi^{-1}\in P(\varphi(K))$.

 \end{enumerate}
\end{theorem}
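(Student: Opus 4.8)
The plan is to re-use the translation principle underlying all the preceding theorems. Once $\varphi$ is a homeomorphism of $K$ onto $\varphi(K)$, the norm-preserving change of variable $w=\varphi(z)$ gives, for every $f\in P(K)$,
\[
f\in[\varphi]_{{\rm alg}}\iff f\circ\varphi^{-1}\in P(\varphi(K)),
\]
since $p_n(\varphi)\to f$ uniformly on $K$ precisely when $p_n\to f\circ\varphi^{-1}$ uniformly on $\varphi(K)$. First I would note that a generator of $P(K)$ is necessarily point-separating; as the coordinate function $z$ already lies in $P(K)$ and separates points of $K$, any $\varphi$ with $[\varphi]_{{\rm alg}}=P(K)$ separates points, and being a continuous bijection of a compactum onto a Hausdorff space it is automatically a homeomorphism onto $\varphi(K)$. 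Thus in both directions I may assume $\varphi$ is a homeomorphism, so that $\varphi^{-1}\in C(\varphi(K),\C)$ and the statement ``$\varphi^{-1}\in P(\varphi(K))$'' is meaningful.

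For the implication (1) $\imp$ (2) I would simply feed the coordinate function into the displayed equivalence. Assuming $P(K)=[\varphi]_{{\rm alg}}$, the function $z\in P(K)$ belongs to $[\varphi]_{{\rm alg}}$, and applying the equivalence with $f=z$ yields $z\circ\varphi^{-1}=\varphi^{-1}\in P(\varphi(K))$. Together with the homeomorphism property this is exactly (2), and no further ingredient is needed.

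For (2) $\imp$ (1) the additional input is Lemma~\ref{ratio}(2), applied with the roles of $K$ and $\varphi$ interchanged. By hypothesis $\varphi^{-1}\in P(\varphi(K))$, so reading the lemma on the compactum $\varphi(K)$ with generator $\varphi^{-1}$, and using $\varphi^{-1}(\varphi(K))=K$, it asserts that $h\in P(K)$ implies $h\circ\varphi^{-1}\in P(\varphi(K))$. Taking $h=f$ for an arbitrary $f\in P(K)$, I obtain $f\circ\varphi^{-1}\in P(\varphi(K))$, hence $f\in[\varphi]_{{\rm alg}}$ by the equivalence above. Since $f$ was arbitrary, $P(K)=[\varphi]_{{\rm alg}}$.

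The step I expect to require the most care is the composition in (2) $\imp$ (1), and it is precisely the reason this theorem cannot be closed by the connectivity-and-Mergelyan argument used for $A(K)$ and $R(K)$: a polynomial in $\varphi^{-1}$ is not a polynomial, so one cannot naively compose two uniform polynomial approximations. Lemma~\ref{ratio}(2) circumvents this by a two-stage approximation, and the only thing I would check carefully is that its hypotheses survive the relabelling $K\leftrightarrow\varphi(K)$, $\varphi\leftrightarrow\varphi^{-1}$ --- in particular that $\varphi^{-1}$ is genuinely a $\C$-valued element of $P(\varphi(K))$ and that $\varphi^{-1}(\varphi(K))=K$.
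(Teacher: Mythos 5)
Your proposal is correct and follows essentially the same route as the paper: the change-of-variable equivalence $f\in[\varphi]_{{\rm alg}}\iff f\circ\varphi^{-1}\in P(\varphi(K))$, the specialization $f(z)=z$ for (1)$\implies$(2), and Lemma~\ref{ratio}(2) applied to $\varphi^{-1}$ on $\varphi(K)$ for (2)$\implies$(1). Your closing remark about why the Mergelyan-based argument of the $R(K)$ and $A(K)$ cases is unavailable here is an accurate reading of the paper's structure as well.
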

\begin{proof}
(1) $\imp$ (2)~   As usual,  if $\varphi$ is a generator, then $\varphi$ is point separating,
hence a homeomorphism of $K$ onto $\varphi(K)$.   Note also  that for $f\in P(K)$, 
$f\in [\varphi]_{{\rm alg}}$ if and only if $f\circ \varphi^{-1}\in P(\varphi(K))$.
In particular, if $f(z)=z$ then  $\varphi^{-1}\in P(\varphi(K))$. 

(2) $\imp$ (1)~  Let $f\in P(K)$. By Lemma \ref{ratio}\; (2) applied to the inverse function,
 the assumption 
$\varphi^{-1}\in P(\varphi(K))$ implies that  $f\circ \varphi^{-1}\in P(\varphi(K))$.
Hence $f\in [\varphi]_{{\rm alg}}$ and so $P(K)= [\varphi]_{{\rm alg}}$.
\end{proof}

It is now a natural question to ask whether the condition $\varphi^{-1}\in P(\varphi(K))$
is redundant or not?   The following example  shows that it is not.

\begin{exemple}\label{noinjext}
Let $$K=\{z\in\C: |z+1|=1\}\union \{z\in\C: |z-2|=2\}$$ (see figure \ref{inji}).
 \begin{figure}[h] 
   \hspace{2cm}
   \scalebox{.40} {\includegraphics{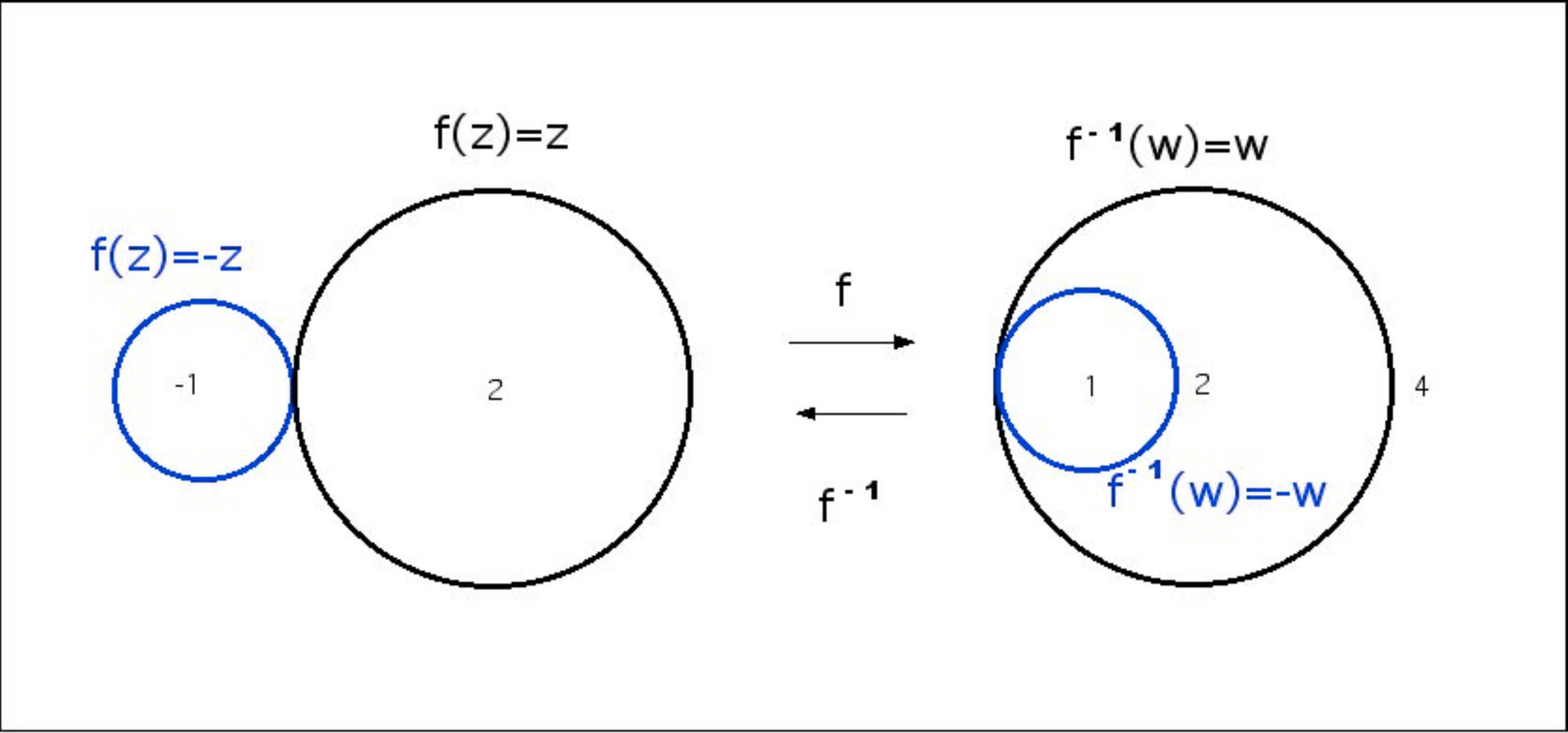}} 
\caption{\label{inji} No injective extension}
\end{figure}

Then the function $f(z)=-z$ for $|z+1|=1$ and $f(z)=z$ for $|z-2|=2$
is injective on $K$ and belongs to $P(K)$, because $f$ has a holomorphic extension
to the polynomial convex hull 
$$\widehat K=\{z\in\C: |z+1|\leq 1\}\union \{z\in\C: |z-2|\leq 2\}$$
of $K$ and so, by Mergelyan's theorem, $f$ can be uniformly approximated on $\widehat K$ by polynomials.

The image $f(K)$ of $K$ under $F$  coincides with the set
$$\{w\in\C: |w-1|=1\}\union \{w\in\C: |w-2|=2\}.$$
Moreover, $f^{-1}(w)= -w$ on $D_1:=\{w\in\C: |w-1|=1\}$ and $f^{-1}(w)=w$ on
$D_2:=\{w\in\C: |w-2|=2\}$.  It is clear that this function does not belong to  $P(f(K))$,
because otherwise, $f^{-1}|_{D_2}$ would have a holomorphic extension to
the polynomial convex hull $\widehat D_2$ of $D_2$. Since this extension can only be $w$  itself,
it  does not coincide with $f^{-1}|_{D_1}(w)=-w$ on $D_1\ss \widehat D_2$.
Note also, that $f$ does not admit a holomorphic  injective extension to $\widehat K$.
\end{exemple}

\begin{proposition}
Let $f\in P(K)$ be a homeomorphism and suppose that $f$ has an injective, holomorphic
extension to the interior of the polynomial convex hull, $\widehat K$,  of $K$.  \footnote{ in the sense
that there is $g\in C(\widehat K)$ such that $g$ is holomorphic in $\widehat K^\circ$ and injective
on $\widehat K$.}  
Then $f^{-1}\in P(f(K))$.

\end{proposition}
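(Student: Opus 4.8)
The plan is to realize $f^{-1}$, regarded as a function on $f(K)=g(K)$, as the restriction to $f(K)$ of a function lying in $A(g(\widehat K))=P(g(\widehat K))$, and then to pass from $g(\widehat K)$ down to $f(K)$ by the trivial fact that membership in $P$ survives restriction to a compact subset. Here $g\in C(\widehat K)$ denotes the given extension, injective on $\widehat K$, holomorphic on $\widehat K^\circ$, with $g|_K=f$; in particular $f^{-1}=g^{-1}|_{f(K)}$.

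First I would note that $g$, being a continuous injection on the compact set $\widehat K$, is a homeomorphism of $\widehat K$ onto the compactum $g(\widehat K)$, so $g^{-1}\colon g(\widehat K)\to\widehat K$ is continuous. The main step is to upgrade this to $g^{-1}\in A(g(\widehat K))$, i.e.\ to show $g^{-1}$ is holomorphic on the interior $g(\widehat K)^\circ$. Since $g$ is a nonconstant holomorphic injection on the open set $\widehat K^\circ$, the open mapping theorem makes $g(\widehat K^\circ)$ open, and the holomorphic inverse function theorem gives that $g^{-1}$ is holomorphic there. What remains is the identity $g(\widehat K^\circ)=g(\widehat K)^\circ$. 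The inclusion $g(\widehat K^\circ)\subseteq g(\widehat K)^\circ$ is immediate, the left side being an open subset of $g(\widehat K)$; for the reverse inclusion I would apply Brouwer's invariance of domain to the continuous injection $g^{-1}$ restricted to the open set $g(\widehat K)^\circ$, concluding that $g^{-1}\bigl(g(\widehat K)^\circ\bigr)$ is an open subset of $\C$ contained in $\widehat K$, hence contained in $\widehat K^\circ$; applying $g$ then yields $g(\widehat K)^\circ\subseteq g(\widehat K^\circ)$. This places $g^{-1}$ in $A(g(\widehat K))$.

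Next I would verify the hypothesis of Mergelyan's theorem for $g(\widehat K)$, namely that $\C\setminus g(\widehat K)$ is connected. Since $\widehat K$ is polynomially convex, $\C\setminus\widehat K$ is connected, and because $g$ is a homeomorphism of $\widehat K$ onto $g(\widehat K)$, the invariance of the number of complementary components under homeomorphisms of planar compacta (the fact already used in the proof of Theorem~\ref{ck}, see \cite[p.~99]{bu}) forces $\C\setminus g(\widehat K)$ to have a single component as well. Mergelyan's theorem then gives $A(g(\widehat K))=P(g(\widehat K))$, so $g^{-1}\in P(g(\widehat K))$.

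Finally I would restrict: since $f(K)=g(K)\subseteq g(\widehat K)$, any sequence of polynomials converging uniformly to $g^{-1}$ on $g(\widehat K)$ converges uniformly to it on the smaller set $f(K)$, whence $g^{-1}|_{f(K)}=f^{-1}\in P(f(K))$, as claimed. I expect the only genuinely delicate point to be the interior identity $g(\widehat K^\circ)=g(\widehat K)^\circ$; everything else reduces to the open mapping theorem, the topological invariance of the complementary components, Mergelyan's theorem, and the stability of $P$-membership under restriction.
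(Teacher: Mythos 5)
Your argument is correct and follows essentially the same route as the paper: show the inverse of the extension lies in $A$ of the image, deduce connectedness of the complement of the image from the topological invariance of the number of complementary components (the paper cites Eilenberg's theorem for this), apply Mergelyan, and restrict to $f(K)$. The only differences are that you supply the details (invariance of domain plus the holomorphic inverse function theorem) behind the step $g^{-1}\in A(g(\widehat K))$, which the paper simply asserts, and that you work directly with the given extension $g$ rather than first identifying it with the Gelfand transform.
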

\begin{proof}
If $f^*$ denotes this extension, then $f^*$ coincides   with the Gelfand 
transform $\hat f$ of $f$ (in fact, $f^*$ and $\hat f$ belong to $A(\widehat K)$ and $f^*=\hat f=f$
on the Shilov boundary of $A(\widehat K)$, which coincides with $\partial K$). 
Now $(f^*)^{-1}\in A(f^*(\widehat K))$. Since $\widehat K$ has connected complement, 
the invariance theorem \ref{eilenberg}(4) implies that $S:=f^*(\widehat K)$ has connected complement, too. Hence,
by Mergelyan's Theorem, $(f^*)^{-1}\in P(S)$. Restricting to $f(K)\ss S$ yields
that $f^{-1}= (f^*)^{-1}|_{f(K)}\in P(f(K))$, because any sequence of polynomials
converging uniformly on $S$ to $(f^*)^{-1}$ converges a fortiori  uniformly on $f(K)$.
\end{proof}

\section{Injective extensions}
Example \ref{noinjext} shows that $P(K)$-functions which are injective on $K$
do not necessarily have an injective holomorphic extension to the polynomial convex hull
of $K$. A positive result in this direction is known, though:
 
\begin{theorem}[Darboux-Picard]\cite[p.  310]{bu}, \cite{pom}\label{dbp}
Let $f\in A(\D)$ and suppose that $f$ is injective on $\partial\D$. Then
$f$ is injective on $\ov\D$.
\end{theorem}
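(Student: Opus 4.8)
The plan is to reduce global injectivity to a zero-counting argument via the argument principle, controlled by the topology of the Jordan curve $f(\partial\D)$. First I would record that $f$ cannot be constant, since a constant map is not injective on $\partial\D$; hence by the open mapping theorem $f(\D)$ is open. Since $f|_{\partial\D}$ is a continuous injection of the circle, its image $\Gamma:=f(\partial\D)$ is a Jordan curve, so by the Jordan curve theorem $\C\setminus\Gamma$ splits into one bounded component $\Omega$ and one unbounded component, and the winding number $\ind(f|_{\partial\D},w)$ equals $\pm1$ for $w\in\Omega$ and $0$ for $w$ in the unbounded component.

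The key step is to show that for every $w\notin\Gamma$ the number $N(w)$ of solutions of $f(z)=w$ in $\D$, counted with multiplicity, equals $\ind(f|_{\partial\D},w)$. Since $f$ need not be differentiable up to $\partial\D$, I would prove this boundary form of the argument principle by exhaustion: because $w\notin\Gamma$ and $\partial\D$ is compact, $|f-w|\geq\delta>0$ on $\partial\D$, so for $r<1$ close enough to $1$ the loops $\theta\mapsto f(re^{i\theta})$ and $\theta\mapsto f(e^{i\theta})$ are homotopic in $\C\setminus\{w\}$ and thus have the same winding number about $w$. Applying the classical argument principle to $z\mapsto f(rz)$, which is holomorphic on a neighbourhood of $\ov\D$, identifies this winding number with the number of zeros of $f-w$ in $\{|z|<r\}$; letting $r\to1^-$ yields $N(w)=\ind(f|_{\partial\D},w)$.

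Combining the two steps finishes the argument. As $N(w)$ is a nonnegative integer and $\ind(f|_{\partial\D},w)\in\{0,\pm1\}$, the equality $N(w)=\ind(f|_{\partial\D},w)$ forces $\ind(f|_{\partial\D},\cdot)\equiv+1$ on $\Omega$ (it cannot be $-1$, being equal to the nonnegative $N$) and $N\equiv0$ on the unbounded component. Thus every $w\in\Omega$ has exactly one, necessarily simple, preimage in $\D$, while no $w$ outside $\ov\Omega$ is attained; and since $f(\D)$ is open it cannot meet $\Gamma$ either, so $f(\D)=\Omega$ exactly. Hence $f$ maps $\D$ bijectively onto $\Omega$ and $\partial\D$ bijectively onto $\Gamma$, and these image sets are disjoint. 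Consequently, if $f(z_1)=f(z_2)$ then $z_1,z_2$ lie on the same piece (both in $\D$ or both in $\partial\D$), and injectivity on each piece gives $z_1=z_2$; therefore $f$ is injective on $\ov\D$.

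I expect the main obstacle to be the boundary version of the argument principle: one must control the zeros of $f-w$ as $r\to1^-$ and justify that no zeros escape to the circle, since $f\in A(\D)$ carries no a priori regularity of $f'$ on $\partial\D$. The purely topological input — that a simple closed curve has winding number $\pm1$ inside and $0$ outside — is standard, but it is precisely the point at which the Jordan curve theorem is genuinely invoked.
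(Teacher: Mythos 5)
Your proof is correct, but note that the paper offers no proof of Theorem \ref{dbp} to compare it with: the result is simply quoted from \cite[p.~310]{bu} and \cite{pom}. What you give is the classical argument, and both of the delicate points you flag at the end are in fact already handled by what you wrote. For the boundary form of the argument principle, the dilation homotopy shows that for every $r$ sufficiently close to $1$ the number of solutions of $f=w$ in $\{|z|<r\}$ equals the fixed integer $\ind(f|_{\partial\D},w)$; since this count is nondecreasing in $r$ and eventually constant, only finitely many solutions exist in $\D$, none can accumulate at $\T$, and $N(w)$ equals that integer. The statement that an injective loop has winding number $\pm1$ about points of its bounded complementary component and $0$ outside is the standard quantitative form of the Jordan curve theorem, which is indeed the one genuinely topological input; your deduction that the index must then be $+1$ on $\Omega$ (because it equals the nonnegative count $N$) and that $f(\D)$ cannot meet $\Gamma$ (openness of $f(\D)$ would force it to meet the unbounded component, where $N=0$) is sound. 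Two remarks tying this to the paper: your homotopy step is precisely the mechanism behind Theorem \ref{rouchhomo} (Rouch\'e for homotopic maps), and Theorem \ref{dpforpk} applied to $K=\T$, whose single hole $\D$ is regular, recovers Darboux--Picard as a special case. So your argument is a self-contained one-disk specialization of the route the paper takes for general compacta rather than a competing method.
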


In the following  we shall deal with the general case of arbitrary compacta. Recall  that  a {\it hole} of a compact set $K$ is a bounded component of $\C\setminus K$ and that the {\it outer boundary},
$S_\infty$, of $K$ is the boundary of the polynomial convex hull $\widehat K$ of $K$. We  need Eilenberg's theorem (see below) and 
the following homotopic variant of Rouch\'e's theorem, the proof of which
is based on  an areal analogue of the argument principle (see \cite[p. 105]{na}).
Here, as usual, 
the  maps $f,g\in C(X,Y)$, defined on Hausdorff spaces $X$ and $Y$, are said to be {\it homotopic} 
in $C(X,Y)$ if there exists a continuous map $H:X\times[0,1]\to Y$
such that $H(x,0)=f(x)$ and $H(x,1)=g(x)$ for every $x\in X$.  

\begin{definition}
For a compact set $K\ss\C$, let  $M(K)$ denote the set of   continuous functions   on $K$ 
that are meromorphic in $K^\circ$.
\end{definition}
Thus, a function in $M(K)$ has only a finite number of poles in $K^\circ$ and none on the boundary.
Of course, $A(K)\ss M(K)$.
Finally, for a  function $f\in M(K)$,
 $n_{\ssc K}(f)$ denotes the number of zeros (possibly infinite) of $f$ in $K^\circ$ and 
 $p_{\ssc K}(f)$ the number of poles of $f$ in $K^\circ$ (including multiplicities).

 \begin{theorem}[{\bf Rouch\'e for homotopic maps}] \label{rouchhomo}
 Let $K\ss\C$ be compact and let $f,g\in M(K)$ be zero-free on $\partial K$.
Suppose that  $f$ and $g$ are homotopic in $C(\partial K, \C^*)$.
Then $n_{\ssc K}(f)-p_{\ssc K}(f)=n_{\ssc K}(g)-p_{\ssc K}(g)$.
\end{theorem}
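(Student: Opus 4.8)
The plan is to reduce the statement to a single null-homotopic function and then to evaluate its index by a winding-number computation carried out on a smooth auxiliary curve, so that all the difficulty is pushed away from the (possibly wild) boundary $\partial K$. Write $N(\varphi):=n_{\ssc K}(\varphi)-p_{\ssc K}(\varphi)$ for $\varphi\in M(K)$ zero-free on $\partial K$. Since the order of a meromorphic function is additive at each point, $N$ is additive, and for $h:=f/g$ one has $h\in M(K)$, $h$ zero-free on $\partial K$ (because $f,g$ are), with $N(h)=N(f)-N(g)$. Here $h$ has only finitely many zeros and poles in $K^\circ$: continuity together with $h|_{\partial K}\neq 0$ keeps $|h|$ bounded away from $0$ and $\infty$ near $\partial K$, confining the zeros and poles to a compact subset of $K^\circ$, where, being isolated, they are finite in number. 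Thus it suffices to prove that $N(h)=0$ whenever $h\in M(K)$ is zero-free on $\partial K$ and null-homotopic in $C(\partial K,\C^*)$; indeed, if $f$ and $g$ are homotopic via $H$, then $G(x,t):=H(x,t)/g(x)$ is a homotopy from $h$ to the constant $1$.

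First I would convert the homotopy hypothesis into a logarithm. By Eilenberg's theorem, a zero-free continuous function on the compact set $\partial K$ is null-homotopic in $C(\partial K,\C^*)$ if and only if it has a continuous logarithm; hence $h=e^{\Lambda}$ on $\partial K$ for some $\Lambda\in C(\partial K)$. I extend $\Lambda$ by Tietze's theorem to $\Phi\in C(\ov B)$, where $B$ is an open disk with $K\ss B$. Then I glue: define $\tilde h:=h$ on $K$ and $\tilde h:=e^{\Phi}$ on $\ov B\setminus K^\circ$. The two definitions agree on the overlap $\partial K$ (there $e^{\Phi}=e^{\Lambda}=h$), both pieces are continuous, and $K$ and $\ov B\setminus K^\circ$ are closed sets covering $\ov B$; by the pasting lemma $\tilde h$ is continuous on $\ov B$ and zero-free except at the finitely many zeros $a_i$ (orders $k_i$) and poles $b_s$ (orders $j_s$) of $h$ in $K^\circ$.

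The heart is then a winding-number count. I choose pairwise disjoint closed disks $\ov{D_i}\ss K^\circ$, each containing exactly one $a_i$ or $b_s$, and set $R:=\ov B\setminus\bigcup_i D_i$. On $R$ the map $\tilde h$ takes values in $\C^*$, so it induces $\tilde h_*:H_1(R;\Z)\to H_1(\C^*;\Z)=\Z$, which is the winding number $w$. In the planar domain $R$ (a disk with finitely many holes) the outer boundary is homologous to the sum of the inner ones, $[\partial B]=\sum_i[\partial D_i]$, so by naturality $w(\tilde h|_{\partial B})=\sum_i w(\tilde h|_{\partial D_i})$. On each $\partial D_i$ the function $\tilde h=h$ is meromorphic with a single zero or pole inside, so the local argument principle gives $w(\tilde h|_{\partial D_i})=+k_i$ or $-j_s$, and summing yields $\sum_i w(\tilde h|_{\partial D_i})=N(h)$. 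On the other hand $\tilde h|_{\partial B}=e^{\Phi|_{\partial B}}$ has a continuous logarithm, hence $w(\tilde h|_{\partial B})=0$. Therefore $N(h)=0$ and consequently $N(f)=N(g)$.

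The main obstacle is exactly the irregularity of $\partial K$: one cannot write $N(h)$ as a boundary integral $\tfrac{1}{2\pi i}\int_{\partial K}dh/h$. The construction above circumvents this by transporting the count to the smooth circle $\partial B$ and filling the possibly very wild region $\ov B\setminus K^\circ$ (the holes of $K$ together with the outer collar) by the zero-free function $e^{\Phi}$; the continuous logarithm supplied by Eilenberg's theorem is precisely what forces the winding number on $\partial B$ to vanish. Two points require care: that $h=f/g$ really has only finitely many zeros and poles, so that $N(h)$ is a well-defined integer, and that the homological identity $[\partial B]=\sum_i[\partial D_i]$ is combined with the naturality of the winding number purely at the level of continuous maps, since $\tilde h$ is merely continuous (not holomorphic) away from a neighborhood of the zeros and poles. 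This gives a self-contained alternative to the author's route through the areal argument principle.
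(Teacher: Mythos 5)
Your proof is correct, and it takes a genuinely different route from the paper's. The paper clears denominators: it factors $f$ and $g$ into rational parts times zero-free $A(K)$-functions, multiplies both by one polynomial $h$ vanishing at all the poles of either, notes that $F=hf$ and $G=hg$ are still homotopic in $C(\partial K,\C^*)$, and then invokes the holomorphic homotopic Rouch\'e theorem of \cite{moru}, whose proof rests on an areal ($\ov\partial$-calculus) form of the argument principle. You instead divide rather than multiply, reducing to the claim that a null-homotopic $h=f/g$ has index zero, and you establish that claim topologically: the null-homotopy yields a continuous logarithm on $\partial K$, Tietze transports it to a large disk $\ov B$, the pasting lemma produces a continuous $\C^*$-valued map on $\ov B$ minus small disks about the finitely many zeros and poles, and the identity $[\partial B]=\sum_i[\partial D_i]$ in $H_1$ of a finitely punctured disk, combined with the local argument principle on each $\partial D_i$ and the vanishing of the winding number of $e^{\Phi}$ on $\partial B$, forces $N(h)=0$. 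What your route buys is self-containedness: it avoids the external reference entirely and replaces the $\ov\partial$-machinery by the homology of a disk with holes and the homotopy invariance of winding numbers of merely continuous maps into $\C^*$. Two small points deserve an explicit word rather than a gesture: the equivalence ``null-homotopic in $C(X,\C^*)$ iff a continuous logarithm exists'' is the Borsuk--Eilenberg lemma and is not literally what Theorem \ref{eilenberg} states (though the paper uses the easy direction freely, you need the converse, which follows from the standard open-and-closed argument in the uniform topology); and the glued function $\tilde h$ is only $\C$-valued away from the poles, so the pasting should be performed, as you in effect do, after the excision of the disks. Neither is a gap; your argument is sound, and your preliminary verification that $f/g$ has only finitely many zeros and poles (confinement to a compact subset of $K^\circ$ plus isolatedness) is exactly the point the paper passes over silently when it asserts that $f$ and $g$ ``have only a finite number of poles and zeros in $K$.''
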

\begin{proof}
For a proof where  $f$ and $g$ have no poles, that is in the case where $f,g\in A(K)$, we refer to
 \cite{moru}.  Now suppose that  $f,g\in M(K)$.  Since $f$ and $g$ have only a finite number
 of poles and zeros in $K$,   we may write them as
 $$f(z)=\frac{\prod_{j=1}^n (z-a_j)^{n_j}}{\prod_{j=1}^p (z-z_j)^{p_j}}\; \tilde f(z),\sp\sp
g(z)=\frac{\prod_{j=1}^m (z-b_j)^{m_j}}{\prod_{j=1}^q (z-w_j)^{q_j}}\;\tilde g(z),$$
where $\tilde f,\tilde g\in A(K)$ are zero-free and $m_j,n_j, p_j, q_j\in \N^*$. 
Note that a zero of $g$ may be a pole or zero of $f$ and vice versa. Put

 $$h(z):=\prod_{j=1}^p (z-z_j)^{p_j}\prod_{j=1}^q (z-w_j)^{q_j}$$ 
and consider the functions $F:=hf$ and $G:=hg$.

Then $F,G\in A(K)$ and $F$ and $G$ are homotopic in $K(\partial K, \C^*)$ (note that
if $H(z,t)$ is a homotopy between  $f$ and $g$, then 
$$\tilde H(z,t):=h(z)\;H(z,t)$$
is a homotopy in $K(\partial K, \C^*)$ between $F$ and $G$).
Hence, by the homotopic  version of Rouch\'e's theorem for holomorphic functions \cite{moru}, 
 $n_{\ssc K} (F)=n_{\ssc K} (G)$; that is
 $$\sum_{j=1}^n n_j+\sum_{j=1}^q q_j= \sum_{j=1}^m m_j+\sum_{j=1}^p p_j.$$
 In other words,  $n_{\ssc K}(f)-p_{\ssc K}(f)=n_{\ssc K}(g)-p_{\ssc K}(g)$.
\end{proof}
Here is a variant of the preceding result. For a bounded  open set $G$ in $\C$, let
$MC(G)$ denote the set of functions continuous on $\ov G$ and meromorphic in $G^\circ$.
Note that, in general, $MC(G)$ cannot be represented as $M(K)$ for some compact space $K$.
For example, if $E\ss\D$ is a compact, nowhere dense  set having positive Lebesgue measure,
then  the planar integral $$f(z)=\dint_E \frac{1}{w-z} d\sigma_2(w)$$
belongs to $MC(\D\setminus E)$, but not to $M(\ov\D)$.

\begin{corollary}
For  a bounded open set $G\ss\C$,
suppose that  $f, g \in MC(G)$  are homotopic in $C(\partial G, \C^*)$.
Then $n_{\ssc G}(f)-p_{\ssc G}(f)=n_{\ssc G}(g)-p_{\ssc G}(g)$.
\end{corollary}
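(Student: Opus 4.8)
The plan is to reduce this corollary to the homotopic Rouch\'e theorem (Theorem~\ref{rouchhomo}) by choosing a suitable compact set inside $G$ that captures all the zeros and poles. The difficulty is precisely the issue flagged just before the statement: $MC(G)$ is strictly larger than any $M(K)$, because $f,g$ may fail to extend meromorphically across the whole of $\ov G$, so I cannot apply Theorem~\ref{rouchhomo} to $\ov G$ directly. The key observation is that each of $f$ and $g$ has only finitely many zeros and poles in $G$: the poles are isolated singularities, finite in number since $f,g$ are continuous hence bounded near $\partial G$, and the zeros are likewise finite because a meromorphic function on a connected component cannot have infinitely many zeros accumulating at an interior point without vanishing identically. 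Denote the finite set of all zeros and poles of $f$ and $g$ in $G$ by $E$.

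First I would fix a compact set $K\ss G$ that is a finite union of small closed disks together with a ``collar'' thickening of $\partial G$, arranged so that $E\ss K^\circ$ and so that $\partial K$ is a disjoint union of circles lying in the annular region $G\setminus K^\circ$ near $\partial G$, on which both $f$ and $g$ are zero-free and holomorphic. Concretely, since $f,g$ are continuous and zero-free on $\partial G$, there is an open neighborhood $U$ of $\partial G$ on which both are zero-free; I choose $K=\ov G\setminus V$ where $V$ is a slightly smaller open neighborhood of $\partial G$ with $\ov V\ss U$, so that $E\ss K^\circ$ while $\partial K\ss U\setminus\partial G$ avoids the zeros and poles. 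On this $K$ we have $f,g\in M(K)$, and $n_{\ssc K}(f)=n_{\ssc G}(f)$, $p_{\ssc K}(f)=p_{\ssc G}(f)$ (and likewise for $g$), because all zeros and poles in $G$ already lie in $K^\circ$.

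The remaining point is homotopy: I must transfer the hypothesis that $f$ and $g$ are homotopic in $C(\partial G,\C^*)$ to a homotopy in $C(\partial K,\C^*)$. The natural device is a retraction of the collar $U\setminus\partial G$ onto $\partial G$ that pushes $\partial K$ to $\partial G$; composing the given homotopy $H:\partial G\times[0,1]\to\C^*$ with this retraction yields a homotopy between $f|_{\partial K}$ and $g|_{\partial K}$ in $C(\partial K,\C^*)$, \emph{provided} that $f|_{\partial K}$ is homotopic to $f|_{\partial G}\circ(\text{retraction})$ and similarly for $g$. This compatibility is where I expect the main obstacle to lie, and it is handled by noting that $f$ itself, being continuous and zero-free on the closed collar $\ov U\setminus V$, provides a homotopy in $\C^*$ between its boundary values on $\partial K$ and on $\partial G$ (simply slide the parameter radially across the collar through the values of $f$, which stay in $\C^*$ throughout). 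The same argument applies to $g$. Concatenating these three homotopies gives the desired homotopy of $f|_{\partial K}$ and $g|_{\partial K}$ in $C(\partial K,\C^*)$.

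Finally I would invoke Theorem~\ref{rouchhomo} on the compact set $K$ to conclude
$$n_{\ssc K}(f)-p_{\ssc K}(f)=n_{\ssc K}(g)-p_{\ssc K}(g),$$
and then translate back using the equalities $n_{\ssc K}(\cdot)=n_{\ssc G}(\cdot)$ and $p_{\ssc K}(\cdot)=p_{\ssc G}(\cdot)$ established above to obtain $n_{\ssc G}(f)-p_{\ssc G}(f)=n_{\ssc G}(g)-p_{\ssc G}(g)$, as claimed. The whole proof is thus an exercise in carving out an honest compact exhaustion of $G$ that is invisible to the zero- and pole-counting but on which the stronger hypotheses of Theorem~\ref{rouchhomo} are available; the one genuinely nontrivial step is verifying that the homotopy survives the passage from $\partial G$ to $\partial K$, which the zero-freeness of $f$ and $g$ on the collar makes possible.
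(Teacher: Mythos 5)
Your strategy coincides with the paper's up to the decisive step: both arguments take $K=\ov G\setminus V$ for a small neighbourhood $V$ of $\partial G$ on which $f$ and $g$ are zero- and pole-free, note that all zeros and poles then lie in $K^\circ$ so that the counts over $K$ and over $G$ agree, and apply Theorem~\ref{rouchhomo} to $K$. The gap sits exactly where you locate ``the one genuinely nontrivial step'': your transfer of the homotopy from $\partial G$ to $\partial K$ rests on a retraction $r$ of the collar onto $\partial G$, together with a homotopy from $f|_{\partial K}$ to $f\circ r$ obtained by ``sliding radially'' through the collar. For a general bounded open set $G$ no such retraction exists. A retract of a locally connected space is locally connected, and every neighbourhood of $\partial G$ in $\ov G$ may be locally connected while $\partial G$ is not; the paper's own example preceding the corollary, $G=\D\setminus E$ with $E$ a fat Cantor set, already exhibits this, since $\partial G=\T\union E$ fails to be locally connected at every point of $E$, whereas every neighbourhood of $\partial G$ in $\ov G=\ov\D$ is relatively open in $\ov\D$ and hence locally connected. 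The same obstruction occurs whenever $\partial G$ contains a topologist's sine curve. Even where a retraction does exist, your sliding homotopy additionally requires it to be a deformation retraction whose tracks stay inside the zero-free collar, which you do not justify. So the argument breaks precisely at its load-bearing joint.

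The transfer can be accomplished with no retraction at all, and this is what the paper does by citing \cite{moru}: if $f$ and $g$ are continuous and zero-free on a closed neighbourhood $A$ of $\partial G$ in $\ov G$ and homotopic in $C(\partial G,\C^*)$, then $q:=f/g$ has a continuous logarithm $L$ on $\partial G$; extend $L$ continuously to $A$ by Tietze's theorem to get $\tilde L$, observe that $qe^{-\tilde L}$ equals $1$ on $\partial G$ and therefore takes values in the disk $|w-1|<1$ on some smaller closed neighbourhood $N$ of $\partial G$, where it consequently has a continuous logarithm, hence so does $q$; that is, $f$ and $g$ are homotopic in $C(N,\C^*)$. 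Choosing $V$ only afterwards, so that $\partial K\ss N$, you may restrict this homotopy to $\partial K$ and conclude as you intended. The rest of your write-up (finiteness of the zero and pole sets, the equalities $n_{\ssc K}=n_{\ssc G}$ and $p_{\ssc K}=p_{\ssc G}$, and the appeal to Theorem~\ref{rouchhomo}) is sound; note also that the circle structure you ascribe to $\partial K$ is never needed, since Theorem~\ref{rouchhomo} applies to arbitrary compacta.
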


\begin{proof}
By assumption, $f$ and $g$ have no zeros and poles on $\partial G$. Hence, there are open
neighborhoods $U$ and $V$ of $\partial G$  with $\partial G\ss U\ss\ov U\ss V$
such that $f,g\in M(\ov G\setminus U)$ and $f$ and $g$   are homotopic in 
$C(\ov V\inter \ov G, \C^*)$ (for this latter point see \cite{moru}).  
 The assertion now follows from Theorem \ref{rouchhomo} if we set $K:=\ov G\setminus U$.
\end{proof}

A proof of the next Theorem is in  \cite[p. 97-101] {bu}.

\begin{theorem}[{\bf Eilenberg}] \label{eilenberg}
Let $K\ss\C$ be compact and 
for each bounded component $C$ of $\C\setminus K$, let $a_{\ssc C}\in C$. 
\begin{enumerate}
\item [(1)] Suppose that  $f:K\to\C\setminus\{0\}$ is continuous.  Then there exist
finitely many bounded components $C_j$ of $\C\setminus K$,  integers $s_j\in \Z$ $(j=1,\dots, n)$,
and $L\in \C(K)$ such that for all $z\in K$ 
$$f(z)=\prod_{j=1}^n (z-a_{\ssc C_j})^{s_j}\; e^{L(z)}.$$
\item [(2)] 
If for some $f\in C(K)$, $0$ belongs to the unbounded component of $\C\setminus f(K)$, then 
 $f$ has a continuous logarithm on $K$.
  \item[(3)] Suppose that  $C_1,\dots,C_n$ are distinct holes  for $K$ and that 
  for some $s_j\in\Z$, $(j=1,\dots, n)$, the function
  $$f(z)=\prod_{j=1}^n (z-a_{C_j})^{s_j},\sp (z\in K)$$
  has a continuous logarithm on $K$. Then $s_1=\dots=s_n=0$.
  \item [(4)] If $f:K\to \C$ is a homeomorphism, then the number of holes of $K$ and
  $f(K)$ coincide.
\end{enumerate}
\end{theorem}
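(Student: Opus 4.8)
The plan is to reduce all four assertions to the study of the abelian group
\[
G(K)=C(K,\C\setminus\{0\})\big/\{\,e^{g}:g\in C(K)\,\}
\]
under pointwise multiplication, whose neutral class consists exactly of the functions admitting a continuous logarithm. The first thing I would record is the \emph{organizing equivalence}: a function $f\in C(K,\C\setminus\{0\})$ has a continuous logarithm if and only if it is homotopic to a constant in $C(K,\C\setminus\{0\})$. One direction is immediate, since $f=e^{g}$ yields the homotopy $H(z,t)=e^{tg(z)}$ joining $1$ to $f$; the converse is the homotopy lifting property of the covering map $\exp\colon\C\to\C\setminus\{0\}$, applied to a homotopy $H$ joining a constant to $f$ together with a chosen constant lift of that constant. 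Consequently $[f_{1}]=[f_{2}]$ in $G(K)$ iff $f_{1}/f_{2}$ has a logarithm iff $f_{1}\simeq f_{2}$, so homotopy classes and $G$-classes coincide. With this in hand, assertion (1) becomes the statement that the winding classes $[\,z-a_{\ssc C}\,]$ \emph{generate} $G(K)$, assertion (3) that they are \emph{independent}, and assertion (4) a consequence of the resulting description of $G(K)$ as a free abelian group.

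I would dispatch (2) first, as an independent lemma. It suffices to produce a continuous logarithm $\ell$ of the identity $\iota(w)=w$ on the compact set $L:=f(K)\ss\C\setminus\{0\}$, for then $\ell\circ f$ is a logarithm of $f$ on $K$. Since $0$ lies in the unbounded component $U$ of $\C\setminus L$, and $\{|w|>R\}\ss U$ once $R$ is large, I can join $0$ to $\infty$ by a Jordan arc $\gamma\ss U$; removing $\gamma$ leaves a simply connected domain on which $1/w$ has a holomorphic primitive, i.e. a single-valued branch of the logarithm. As $L\inter\gamma=\emp$ and $0\in\gamma$, restricting this branch to $L$ gives the desired $\ell$, proving (2).

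The analytic heart of the theorem is the generation statement (1), and this is the step I expect to be the main obstacle. My plan is first to reduce to finitely many holes: extend $f$ by Tietze to $\tilde f\in C(\C)$ and shrink to an open $\Omega\supseteq K$ on which $\tilde f$ is still zero-free; since $\widehat K\setminus\Omega$ is compact and covered by the (open) holes of $K$, all but finitely many holes $C_{1},\dots,C_{n}$ lie inside $\Omega$, and on such a hole the zero-free extension $\tilde f$ already witnesses triviality of the winding. The remaining and genuinely hard point is to attach to each relevant hole $C_{j}$ a well-defined integer winding number $s_{j}$ of the merely continuous map $f$, and to prove the extension lemma that $f\,\prod_{j}(z-a_{\ssc C_{j}})^{-s_{j}}$, having zero winding around every hole, extends to a zero-free continuous function on $\widehat K$; combined with (2) applied on $\widehat K$ (whose complement is connected) this yields a logarithm, hence (1). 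Making winding numbers of continuous maps precise and carrying out the hole-by-hole filling is exactly the planar-topology content worked out on pp.~97--101 of \cite{bu}, and I would either invoke it or reconstruct it by an inductive filling argument.

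Finally I would derive (3) and (4) from (1). For (3), if $\prod_{j}(z-a_{\ssc C_{j}})^{s_{j}}$ has a logarithm on $K$ it is null-homotopic, so composing with a closed curve $\sigma\ss K$ whose winding number is $1$ about $a_{\ssc C_{k}}$ and $0$ about the other $a_{\ssc C_{j}}$ forces the winding number of the composite about $0$ to be simultaneously $s_{k}$ and $0$; the existence of such separating cycles in $K$ then gives $s_{k}=0$ for each $k$. Thus the winding classes form a basis and $G(K)$ is free abelian on the set of holes. For (4), a homeomorphism $f$ of $K$ onto $f(K)$ induces, via $h\mapsto h\circ f$, a group isomorphism $C(f(K),\C\setminus\{0\})\to C(K,\C\setminus\{0\})$ carrying logarithmic functions to logarithmic functions, hence an isomorphism $G(f(K))\cong G(K)$. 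Since the rank of a free abelian group is an isomorphism invariant and, by (1) and (3), equals the number of holes, $K$ and $f(K)$ have the same number of holes.
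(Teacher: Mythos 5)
The paper offers no proof of this theorem at all; it simply cites Burckel \cite[pp.~97--101]{bu}, so there is no ``paper's route'' to compare against beyond that reference. Your organizing framework (logarithms $\Leftrightarrow$ null-homotopy via the covering $\exp\colon\C\to\C^*$, the group $G(K)$, and the deduction of (4) from (1) and (3) by the isomorphism-invariance of the rank of a free abelian group) is sound, and your proof of (2) is correct and is in fact the same arc-removal argument the paper itself uses later in Section~3. For (1), however, you explicitly defer the crux --- the hole-by-hole filling/extension lemma --- to the very pages of \cite{bu} that the paper cites for the whole theorem, so that part is a plan rather than a proof.

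The genuine gap is in your argument for (3). You propose to detect $s_k$ by composing $f$ with ``a closed curve $\sigma\ss K$ whose winding number is $1$ about $a_{\ssc C_k}$ and $0$ about the other $a_{\ssc C_j}$,'' and you assert ``the existence of such separating cycles in $K$'' without justification. Such cycles need not exist: the paper itself points to the Lakes-of-Wada phenomenon, where several holes share one boundary, and a sharper counterexample is the pseudo-circle, a planar continuum with exactly one hole which, being hereditarily indecomposable, contains no arcs --- so every continuous map $S^1\to K$ has a nondegenerate locally connected (hence arcwise connected) image unless it is constant, and therefore every closed curve in $K$ is constant and has winding number $0$ about every point. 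Your method then yields no information, yet (3) is still true for that $K$ (with $n=1$, $s_1=1$). The standard Borsuk--Eilenberg proof avoids curves inside $K$ altogether: assuming $\prod_j(z-a_{\ssc C_j})^{s_j}=e^{L}$ on $K$, one Tietze-extends $L$, uses $e^{\tilde L}$ to fill the zero-free function across the hole $C_k$, and contradicts a winding-number computation on a large circle in the \emph{complement}. Until the independence statement (3) is established by such an argument, your identification of $G(K)$ with the free abelian group on the holes --- and hence your proof of (4) --- is not complete.
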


\begin{proposition}\label{outerboundary}
Let $K\ss\C$ be   a compact set for which $\C\setminus K$ is connected and  let $G$ be a 
bounded component of $\C\setminus\partial K$. The following assertions hold:
\begin{enumerate}
\item [(1)] $G$ is simply connected.
\item[(2)] $\partial \ov G=\partial G$.
\item [(3)] ${\ov G\,}^\circ=G$.
\end{enumerate}
\end{proposition}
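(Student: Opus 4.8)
\emph{Plan.} The first move is to recognize $G$ as a component of the interior $K^\circ$. Since $\partial K=\partial(\C\setminus K)$ and
$\C\setminus\partial K=K^\circ\cup(\C\setminus K)$ is a disjoint union of two open sets, every component of $\C\setminus\partial K$ lies in one of them. The hypothesis that $\C\setminus K$ is connected (hence the unique, unbounded ``exterior'' piece) then forces the \emph{bounded} components of $\C\setminus\partial K$ to be exactly the connected components of $K^\circ$. Thus I may take $G$ to be a component of $K^\circ$, and in particular $\ov G\ss K$, which will be the only two facts about $G$ used below.

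For (2) and (3) I would first observe that they are equivalent and both amount to the single statement $(\ov G)^\circ=G$. Indeed, for any open set $G$ one has $G\ss(\ov G)^\circ$ and, writing $\partial\ov G=\ov G\setminus(\ov G)^\circ$ and $\partial G=\ov G\setminus G$, the inclusion $G\ss(\ov G)^\circ$ gives $\partial\ov G\ss\partial G$ automatically, with equality if and only if $(\ov G)^\circ=G$. To prove $(\ov G)^\circ\ss G$, note $(\ov G)^\circ$ is open and contained in $\ov G\ss K$, hence contained in $K^\circ$. If there were a point $p\in(\ov G)^\circ\setminus G$, then $p$ would lie in some component $G'\neq G$ of $K^\circ$; a small ball about $p$ would meet $G'$ in a nonempty open set lying in $\ov G\setminus G=\partial G$, contradicting the fact that the boundary of an open set has empty interior. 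This settles (3), and (2) follows.

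The substantive part is (1), which I would prove through the winding-number characterization: a bounded open connected set $G\ss\C$ is simply connected if and only if $\ind_\gamma(a)=0$ for every closed curve $\gamma$ in $G$ and every $a\in\C\setminus G$. Fix such a $\gamma$. For $a\in\C\setminus K$, the set $\C\setminus K$ is connected, unbounded, and disjoint from $\gamma\ss G\ss K$, so it lies in a single component of $\C\setminus\gamma$, necessarily the unbounded one; hence $\ind_\gamma(a)=0$. For $a\in K\setminus G$, let $V$ be the component of $\C\setminus\gamma$ containing $a$. If $V$ were bounded it would be open with $\partial V\ss\gamma\ss G$, and (by the same connectedness argument) disjoint from $\C\setminus K$, so $V\ss K^\circ$; then $\ov V\ss K^\circ$ is connected and meets $G$ along $\partial V$, forcing $\ov V\ss G$ since $G$ is a component of $K^\circ$, whence $a\in G$ — a contradiction. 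So $V$ is unbounded and $\ind_\gamma(a)=0$. As all winding numbers vanish, $G$ is simply connected.

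The step I expect to be the main obstacle is (1), specifically the two component-tracking lemmas it rests on: that a connected set disjoint from $\gamma$ must sit inside one component of $\C\setminus\gamma$, and that a bounded complementary component of $\gamma$ is trapped inside $K^\circ$ and then inside the single interior component $G$. Once these are in place the winding-number bookkeeping closes up cleanly, and parts (2)--(3) are then routine point-set topology built on the identification of $G$ as a component of $K^\circ$.
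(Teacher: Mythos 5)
Your proof is correct, but it takes a genuinely different route from the paper's at both of its substantive points. For (1), the paper argues globally: it writes $\C\setminus G$ as the union of $\ov{\C\setminus K}$ with the closures of the remaining holes of $\partial K$, notes that each of these sets is connected and meets the connected set $\ov{\C\setminus K}$, concludes that $\C\setminus G$ is connected, and invokes the ``connected complement'' characterization of simple connectivity. You instead verify the winding-number characterization curve by curve, splitting $a\notin G$ into the case $a\in\C\setminus K$ (which lies in the unbounded component of the complement of $\gamma$) and $a\in K\setminus G$ (where a bounded component of $\C\setminus\gamma$ containing $a$ would have its closure absorbed into the component $G$ of $K^\circ$, a contradiction). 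Both arguments rest on standard equivalences for planar domains; the paper's is shorter, while yours makes the role of the component structure of $K^\circ$ more explicit. For (2)--(3), the paper proves (2) directly, using that $\partial G\ss\partial K=\partial(\C\setminus K)$, so every neighbourhood of a point of $\partial G$ meets $\C\setminus K\ss\C\setminus\ov G$, and it cites the equivalence of (2) and (3) as well known; you prove (3) directly, via $(\ov G)^\circ\ss K^\circ$ together with the observation that a point of $(\ov G)^\circ\setminus G$ would force $\partial G$ to contain a nonempty open set, and then deduce (2). Your opening identification of the bounded components of $\C\setminus\partial K$ with the components of $K^\circ$ (valid precisely because $\C\setminus K$ is connected) is left implicit in the paper and is a useful thing to have spelled out.
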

Item  (1) and the equivalence of (2) with (3) for  non-void open sets in general topological spaces 
are well known.
We include  a proof  of (1) and (2) for the reader's convenience.

 \begin{proof}
(1)  Let $\mathcal H:=\{G_n: n\in I\}$ be the set of holes of $\partial K$ and let 
$C:=(\C\setminus K)\union\partial K$. Let $n_0\in I$ be chosen so that $G=G_{n_0}$. 
 Note that $G_{n_0}$ is an open set and 
that for every $n$, $\partial G_n\ss \partial K\ss C$. Hence 
$$\C\setminus G_{n_0}= C\union \Union_{n\in I\atop n\not=n_0}G_n=
C \union \Union_{n\in I\atop n\not=n_0} \ov {G_n}.$$
Since $C=\ov{\C\setminus K}$, the assumption of the connectedness of $\C\setminus K$ implies 
that $C$ is connected.  Moreover,   $\ov {G_{n}}$ is connected for every $n$ and 
$\ov {G_{n}}\inter C\not=\emp$.
Hence  the union of all of these connected sets is connected;
that is $\C\setminus G_{n_0}$ is connected. Thus $G_{n_0}$ is a simply connected domain.

(2) First we note that for any set $M$ in any topological space, 
$\partial \ov M\ss \partial M$. 
The reverse inclusion now is a specific property of the set $G$.  So let $x\in \partial G$ and $U$
a neighborhood of $x$. Since the connectivity of $\C\setminus K$ implies that $\partial K=\partial\widehat K$
 we deduce from  $\partial G\ss \partial K$  
that $U$ meets the unbounded component of $\C\setminus K$.
 Since $\ov G=G\union \partial G\ss \widehat K=K$, $U$ cannot be entirely contained
 in $\ov G$. Hence $U$ meets the complement of $\ov G$ as well as $\ov G$. That is $x\in \partial \ov G$.
 We conclude that $\partial \ov G=\partial G$.
\end{proof}

Here is now the main result of this paper.
Recall that if $f\in P(K)$, then the Gelfand transform $f^*$ of $f$ is the unique continuous 
extension of $f$ to $\widehat K$ that is 
holomorphic in  $\widehat K^\circ$.
In particular, if $K\not=\widehat K$, then every function $f\in P(K)$ is holomorphic
in a neighborhood of each ``inner-boundary'' point $z_0\in \partial K\inter {\widehat K\,}^\circ$
(whenever they exist).

\begin{theorem}\label{maintheo}
Let $K\ss\C$ be compact.  Suppose that $f\in P(K)$ is injective. 
Then   $f^*$ is injective on $\widehat K$ if and only if 
 the outer boundary  $S_\infty$  of $K$
is mapped under $f$ onto the outer boundary  of $f(K)$. 
Moreover, in that case, $f^*(\widehat K)=\widehat{f(K)}$ and
each hole of $f(S_\infty)$ is the image under $f^*$
of a unique hole of $S_\infty$. 
\end{theorem}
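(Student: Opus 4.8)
The plan is to treat both implications through a single bookkeeping device. Since $S_\infty=\partial\widehat K\ss K$, we have $f^*|_{S_\infty}=f|_{S_\infty}$, and $f$ injective makes $f|_{S_\infty}$ a homeomorphism of $S_\infty$ onto $f(S_\infty)$. For $w\notin f(S_\infty)$ set $N(w):=n_{\ssc\widehat K}(f^*-w)$, the number of solutions of $f^*(z)=w$ in $\widehat K^\circ$ counted with multiplicity; as $f^*\in A(\widehat K)$ has no poles, this is exactly the zero--count governed by Theorem \ref{rouchhomo}. The unconditional starting point is a maximum--principle inclusion: for every polynomial $p$, $p\circ f^*$ is holomorphic on $\widehat K^\circ$ and continuous on $\widehat K$, so $\max_{\widehat K}|p\circ f^*|=\max_{S_\infty}|p\circ f|$; hence $f^*(\widehat K)\ss\widehat{f(S_\infty)}\ss\widehat{f(K)}$.

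For the implication $f^*$ injective $\imp f(S_\infty)=\partial\widehat{f(K)}$, I would argue that an injective $f^*$ is a homeomorphism of $\widehat K$ onto $S:=f^*(\widehat K)$; since $\widehat K$ is polynomially convex (no holes), Eilenberg's invariance theorem \ref{eilenberg}(4) shows $S$ has no holes, i.e. $S$ is polynomially convex. Together with $f(K)\ss S$ this gives $\widehat{f(K)}\ss S$, while the inclusion above gives $S\ss\widehat{f(K)}$; hence $f^*(\widehat K)=\widehat{f(K)}$, which is already the ``moreover'' equality. Invariance of domain applied to $f^*|_{\widehat K^\circ}$ and to $(f^*)^{-1}|_{S^\circ}$ forces $f^*(\widehat K^\circ)=S^\circ$, whence $f(S_\infty)=f^*(\widehat K\setminus\widehat K^\circ)=S\setminus S^\circ=\partial S=\partial\widehat{f(K)}$ (the degenerate case $\widehat K^\circ=\emp$ being immediate).

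For the converse, assume $f(S_\infty)=\partial\widehat{f(K)}$. Then $\widehat{f(S_\infty)}=\widehat{f(K)}$ (the hull of an outer boundary is the full hull), so the inclusion above sharpens to $f^*(\widehat K)\ss\widehat{f(K)}$. Now $\C\setminus f(S_\infty)=\C\setminus\partial\widehat{f(K)}$ has unbounded component $\C\setminus\widehat{f(K)}$ and bounded components equal to the components of $\widehat{f(K)}^\circ$. By Theorem \ref{rouchhomo}, $N(w)$ is constant on each component (as $w$ moves inside one, $f^*-w$ runs through homotopic zero--free functions on $S_\infty$), and vanishes on the unbounded one since $f^*(\widehat K)\ss\widehat{f(K)}$. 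The heart of the matter is $N\equiv1$ on every bounded component $\Omega$: writing, by Eilenberg \ref{eilenberg}(1) on the compactum $S_\infty$, $f^*-w=\prod_\alpha(z-a_\alpha)^{s_\alpha}e^{L}$ with $a_\alpha$ a base point in the hole $G_\alpha$ of $S_\infty$ (a component of $\widehat K^\circ$, with $\overline{G_\alpha}^\circ=G_\alpha$ by Proposition \ref{outerboundary}(3)), a fibrewise application of Theorem \ref{rouchhomo} on each $\overline{G_\alpha}$ gives $n_{\ssc\overline{G_\alpha}}(f^*-w)=s_\alpha\ge0$, so $N(w)=\sum_\alpha s_\alpha$; one then transports these exponents through the boundary homeomorphism $f|_{S_\infty}\colon S_\infty\to\partial\widehat{f(K)}$ and compares with the trivial factorization of $w\mapsto w-w_0$ on $\partial\widehat{f(K)}$, whose only nonzero exponent is a single $+1$ at the component containing $w_0$. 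Granting $N\equiv1$ on bounded components, injectivity follows: a value $w\notin f(S_\infty)$ lies in a bounded component (else it has no preimage) and has a unique simple preimage in $\widehat K^\circ$; a value $w\in f(S_\infty)$ has a unique preimage on $S_\infty$ by injectivity of $f$ and no interior preimage, since $f^*$ is nonconstant on each $G_\alpha$ (its boundary values are injective), so by the open mapping theorem an interior preimage would place $w$ in the open set $f^*(\widehat K^\circ)\ss\widehat{f(K)}^\circ$, disjoint from $\partial\widehat{f(K)}=f(S_\infty)$. Surjectivity and the hole correspondence then come for free: $N\equiv1$ shows every point of $\widehat{f(K)}^\circ$ is attained, giving $f^*(\widehat K)=\widehat{f(K)}$, and as $f^*$ is now a homeomorphism of $\widehat K$ onto $\widehat{f(K)}$, invariance of domain maps each hole $G_\alpha$ of $S_\infty$ onto a distinct component of $\widehat{f(K)}^\circ$, i.e. each hole of $f(S_\infty)$ is the image of a unique hole of $S_\infty$.

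I expect the transport step --- matching the Eilenberg exponents $s_\alpha$ of $f^*-w$ on $S_\infty$ with those of $\mathrm{id}-w_0$ on $\partial\widehat{f(K)}$ across the \emph{mere} boundary homeomorphism $f|_{S_\infty}$ --- to be the main obstacle, because a homeomorphism of plane compacta need not extend to their complements, so the bijection of holes provided by \ref{eilenberg}(4) must be upgraded to one that also preserves winding (exponent) data. The holomorphicity of $f^*$, hence its orientation preservation, together with the homotopic form of Rouch\'e in Theorem \ref{rouchhomo}, are precisely what should make this upgrade go through, but it is the delicate point that must be argued with care.
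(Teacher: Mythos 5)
Your forward implication and the unconditional inclusion $f^*(\widehat K)\ss\widehat{f(K)}$ match the paper's part (1), and your overall framework --- a counting function $N(w)=n_{\ssc\widehat K}(f^*-w)$ that is locally constant on the components of $\C\setminus f(S_\infty)$ by Theorem \ref{rouchhomo} and vanishes on the unbounded component --- is sound. But the argument is not complete: the assertion $N\equiv 1$ on the bounded components is the entire content of the hard direction, and you leave exactly this step (the ``transport'' of the Eilenberg exponents of $f^*-w$ on $S_\infty$ to the trivial factorization of $w-w_0$ on $\partial\widehat{f(K)}$) as an expectation rather than an argument. You correctly diagnose why it is delicate: Theorem \ref{eilenberg}(4) gives a bijection of holes, but says nothing about how the basic zero-free functions $(z-a_\alpha)$ on $S_\infty$ decompose against the $(w-b_j)$ on $f(S_\infty)$ under a homeomorphism that need not extend to the plane; without controlling that matrix of exponents you cannot conclude $\sum_\alpha s_\alpha=1$. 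As written, this is a genuine gap, not a routine verification.

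The paper closes precisely this gap with one further idea absent from your write-up: apply Eilenberg's representation \ref{eilenberg}(1) not to $f^*-w$ but to the \emph{inverse} map $g=f^{-1}\colon\partial\widehat{f(K)}\to S_\infty$, which is a zero-free continuous function on the compactum $\partial\widehat{f(K)}$ after normalizing $0\in G$ for the hole $G$ under study (this is also where the hypothesis $f(S_\infty)=\partial\widehat{f(K)}$ enters: it makes $g$ defined on all of $\partial\widehat{f(K)}$). Writing $g(w)=\prod_j(w-b_j)^{s_j}e^{L(w)}$ and substituting $w=f(z)$ yields the identity $z=\prod_j(f(z)-b_j)^{s_j}e^{L(f(z))}$ on $S_\infty$, i.e.\ an explicit homotopy in $C(\partial G,\C^*)$ between $\psi(z)=\prod_j(f^*(z)-b_j)^{s_j}$ (a function in $M(\widehat K)$, since some $s_j$ may be negative) and the identity map. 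Theorem \ref{rouchhomo} then gives $n_G(\psi)-p_G(\psi)=n_G(z)=1$, and since $f^*(G)$ lies in a single hole $H_1$ of $\partial\widehat{f(K)}$, only the factor $(f^*-b_1)^{s_1}$ contributes on $G$, forcing $s_1=1$ and a unique preimage of $b_1$ in $G$; a second application with a hole $C\neq G$ (so $0\notin C$) gives $n_C(\psi)-p_C(\psi)=0$ and rules out two holes mapping onto the same $H_1$. In short, your skeleton is the right one and uses the same two tools (Eilenberg plus homotopic Rouch\'e), but the pivotal exponent computation must be performed by representing $f^{-1}$, not $f^*-w$, via Eilenberg's theorem; until that (or an equivalent device) is supplied, the proof is incomplete.
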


Let us mention that Example  \ref{noinjext} provides  an 
injective function $f\in P(K)$ that does {\it not } map 
the outer boundary to the outer boundary.

\begin{proof}

(1)  Let $f^*$ be injective on $\widehat K$.
Note that $S_\infty=\partial \widehat K\ss \partial K$ and that the outer boundary of
$f(K)$ coincides with $\partial \widehat{f(K)}$.
 It remains to show that
\begin{equation}\label{pcimage}
\partial \widehat{f(K)}=\partial f^*(\widehat K)=f^*(\partial \widehat K).
\end{equation}
Here the second equality is satisfied due to the assumption that $f^*$ is a homeomorphism
between $\widehat K$ and $f^*(\widehat K)$.
Now $\widehat K$ is  polynomially convex.  
Hence,  by Theorem \ref{eilenberg} (4), $f^*(\widehat K)$ has no holes. 
Consequently,  $\partial f^*(\widehat K)$ is the outer boundary of $f^*(\widehat K)$  and the polynomial convexity of $f^*(\widehat K)$ implies that 
$$
\widehat{f(K)}\ss f^*(\widehat K).
$$
But we also have the reverse inclusion.
 In fact, let  $\check w=f^*(\check z)\in f^*(\widehat K)$,
where $\check z\in \widehat K$. 
Since $p\circ f\in P(K)$  for every polynomial $p\in \C[z]$, we conclude from 
$\max_K |h|=\max_{\widehat K}|h^*|$ for every $h\in P(K)$, that 
$$|(p\circ f)^*(\check z)|\leq \max_{z\in K} |(p\circ f)(z)|.$$
Hence
$$ |p(\check w)|\leq \max\{|p(y)|: y\in f(K)\}.$$
In other words, $\check w\in \widehat{f(K)}$. This implies that 
\begin{equation}\label{hullinclu}
f^*(\widehat K)\ss \widehat{f(K)}.
\end{equation}
(Note that (\ref{hullinclu}) holds
independently of $f^*$ being injective or not.)
Thus 
\begin{equation}\label{hullsequal}
f^*(\widehat K)= \widehat{f(K)},
\end{equation}
 and therefore  $\partial \widehat{f(K)}=\partial f^*(\widehat K),$
 which establishes (\ref{pcimage}).\\

(2) Next we prove the converse.
We may  assume that $K$ is not polynomially convex, otherwise there is nothing to show. In particular, ${\widehat K\,}^\circ\not=\emp$. 
So  suppose that $\partial \widehat{f(K)}=f(\partial \widehat K)$.\\

{\bf Step 1} We show that  $f^*|_G$ is injective for every hole $G$ of $\partial \widehat K$.

Let $M:=f(\partial G)$ and $S:=\widehat{f(K)}$.   Then $\partial S$ is the outer boundary
of $f(K)$, and
$$M=f(\partial G)\ss f(\partial \widehat K)=\partial \widehat{f(K)}=\partial S.$$

 Let $a$ belong to the unbounded  component,  $\Omega_\infty$, of $\C\setminus M$.
 Then $0$  belongs to the unbounded component of $\C\setminus (f-a)(\partial G)$. 
 By Theorem \ref{eilenberg}(2), 
$f(z) -a =e^{L(z)}$ for some $L\in C(\partial G,\C)$. Hence $f-a$ is homotopic
in $C(\partial G,\C^*)$ to 1. Since $\partial G=\partial \ov G$,  (Proposition \ref{outerboundary})
we conclude from Theorem  \ref{rouchhomo} that  $f^*-a$ has no zeros in 
${\ov G\,}^\circ=G$. Hence 
\begin{equation}\label{inclu1}
f^*(G)\ss \widehat M. 
\end{equation}

Next, we claim that $f^*(G)\inter \partial S=\emp$. To see this, let us suppose that there exists
$z\in G$ with $f^*(z)\in \partial S$. Since $f^*$ is holomorphic in $G$ (and
due to the injectivity on the boundary,  not constant on $G$), we conclude that
 $f^*$ is an open map on $G$. 
 Hence a whole  disk  $D(f^*(z),\e)$ belongs to $f^*(G)$. Thus $f^*(G)$ meets the unbounded
component  $C_\infty$, of $\C\setminus S$ (note that $S$ is polynomially convex). This is 
 a contradiction because $ C_\infty\ss \Omega_\infty$ and no point in $\Omega_\infty$ belongs
 to $f^*(G)$, as was shown above. Consequently, $f^*(G)\inter \partial S=\emp$.

Because  $\widehat M=\widehat{f(\partial G)}\ss \widehat {f(K)} =S$, we then  
conclude from (\ref{inclu1}) that 
$f^*(G)\ss \widehat M\setminus \partial S\ss S\setminus \partial S$. 
But $S^\circ\not=\emp$, since the open set $f^*(G)$ is contained in 
$f^*(\widehat K)\buildrel\ss_{}^{(\ref{hullinclu})}  \widehat{f(K)}= S$.
Hence $S\setminus \partial S$ is a non-void  open set. Because $\C\setminus S$ is connected,
  $S\setminus \partial S$ consists of the union of all holes of $\partial S$. 
Thus the connected set $f^*(G)$ is contained in a unique hole, $H$, of $\partial S$.

Next we show that every point in $H$ is taken once by $f^*$ on $G$.  
For technical reasons, we suppose that  $0\in G$ (otherwise we use an appropriate translation).

Fix $b\in H$. Let $g:\partial S\to S_\infty\ss K$ be the restriction to $\partial S$
of the  inverse of $f$ (here  we have used  the hypothesis that $f$ maps the outer
boundary $S_\infty$ of $K$ onto the outer boundary $S$ of $f(K)$).
Note that $g$ does not take the value $0$ because, by assumption,  
$0 \in \C\setminus \partial\widehat K$.
By Theorem \ref{eilenberg}(4),   $\partial S$ and $S_\infty$ have the same number of holes.
Let $\mathcal H:=\{H_j: j\in I\}$ be the set of holes of $\partial S$. We may assume that $H_1=H$.
Fix in each hole $H_j$ of $\partial S$ a point $b_j$, $(j\in I\ss\N^*)$, where  we take $b_1=b$.
 By Eilenberg's Theorem \ref{eilenberg}, there exists $n\in \N$, 
  $L\in C(\partial S,\C)$ and $s_j\in \Z$ such that
$$\mbox{$g(w)=\prod_{j=1}^n(w-b_j)^{s_j}e^{L(w)}$ for every $w\in \partial S$.}$$
If $z:=g(w)$ (or equivalently $w=f (z)$), then $z\in \partial \widehat K= S_\infty\ss \partial K$ and
\begin{equation}\label{specialhomo}
\mbox{$z= \prod_{j=1}^n(f(z)-b_j)^{s_j} e^{L(f(z))}$ for these  $z $.}
\end{equation}
 In particular
\begin{equation}
H(z,t):= \prod_{j=1}^n(f(z)-b_j)^{s_j} e^{tL(f(z))}
\end{equation}
is a homotopy in $C(\partial G,\C^*)$ between
 the function $\prod_{j=1}^n(f(z)-b_j)^{s_j}$ and the identity function $z$.
Now, for $z\in \widehat K$,  $$\psi(z):=\prod_{j=1}^n(f^*(z)-b_j)^{s_j}$$
is  a meromorphic function in $M(\widehat K)$.  Also, $\partial G=\partial\ov  G$ and
${\ov G\,}^\circ=G$ (Proposition \ref{outerboundary}).
Hence, by Theorem \ref{rouchhomo},
$n_G(\psi)-p_G(\psi)=1$. Since $f^*(G)\ss H_1$,  $\psi|_G(z)=(f^*(z)-b_1)^{s_1} R(z)$,
where $R$ is zero-free and holomorphic on $G$. We conclude that $s_1=1$ and $f^*(z_1)=b_1$ 
for a unique $z_1\in G$.
Hence $f^*$ is a bijection of $G$ onto $H_1$. Since $f(\partial G)\ss \partial S$, $f^*$  actually is
a bijection from $\ov G$ onto $\ov H_1$. \\

{\bf Step 2} We claim that $f^*$ is injective on $\widehat K$. 
It only remains to show that  $f^*(G)\inter f^*(C)=\emp$ whenever $G$ and $C$ are two different holes
of $S_\infty=\partial\widehat K$. To see this, suppose that $f^*(G)\inter f^*(C)\not=\emp$.
Since the images of $G$ and $C$ under $f^*$ are holes of  $\partial S$, we conclude that
$f^*(C)=f^*(G)=H_1$.  Moreover, 
$$f^*(\partial G)=\partial f^*(G)=\partial f^*(C)=f^*(\partial C).$$
The injectivity of $f$ on $\partial K$ and the fact that $\partial C\union \partial G\ss\partial K$
now imply that $\partial G=\partial C$. Moreover, $\partial \ov C=\partial C$.
Since $0\in G\not=C$, we conclude from (\ref{specialhomo}) and Theorem \ref{rouchhomo} that
$n_C(\psi)-p_C(\psi)=0$.  On the other hand,  since $f^*(C)\ss H_1$,  $\psi|_C(z)=(f^*(z)-b_1)^{s_1} R(z)$,
where $R$ is zero-free and holomorphic on $C$.  Now $s_1=1$ implies
 that $p_C(\psi)=0$. Hence $n_C(\psi)=0$, too. 
This is  contradiction, though,  because $f^*(C)=H_1$ and $b_1\in H_1$. 
Thus we have shown that $f^*$ is a bijection of $\widehat K$ onto $f^*(\widehat K)$.
 
(3) If  $f^*$ is a homeomorphism of $\widehat K$ onto its image $f^*(\widehat K)$, then
we have already shown   that  $f^*(\widehat K)=\widehat{f(K)}$ (see \ref{hullsequal}).
Hence,  we conclude from the
preceding  paragraphs (applied to $(f^*)^{-1}$) that each hole $H$ 
of $\partial\widehat{f(K)}=f(S_\infty)$ 
writes as $H=f^*(G)$ for some
uniquely determined hole $G$ of $S_\infty=\partial\widehat K$.
\end{proof}

A natural question  is whether a compactum  $K$ with  a single hole has the so-called
{\it extension property}, that is if $f\in P(K)$ is injective, then $f^*$ is injective on $\widehat K$.
A slight modification of Example \ref{noinjext} shows that this is  not true, either:

\begin{example}\label{exo2}
Let
 $$K_1=\{z\in\C: |z+1|\leq 1\}\union \{z\in\C: |z-2|=2\}$$ 
 (see figure \ref{inji2}).
 \begin{figure}[h] 
   \scalebox{.35} {\includegraphics{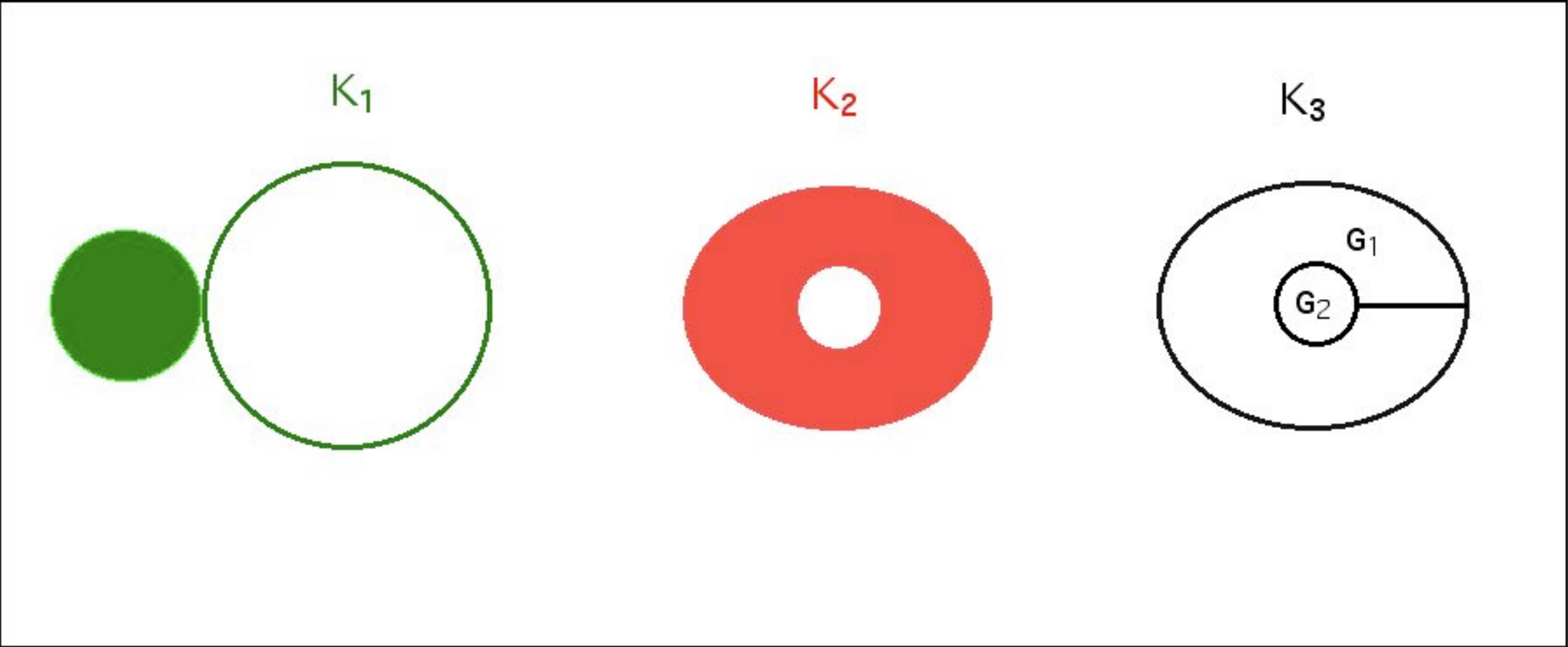}} 
\caption{\label{inji2} Regular and non-regular holes}
\end{figure}

Then the function $f(z)=-z$ for $|z+1|\leq 1$ and $f(z)=z$ for $|z-2|=2$
belongs to $P(K_1)$, but of course, by the same reasoning as in Example \ref{noinjext}
$f^*$ is not injective on $\widehat K_1$. 
\end{example}

So let us modify the question a little bit:
let $G$ be a hole of $K$ and suppose that $f\in P(K)$ is injective. Is $f^*|_G$ injective?
See figure \ref{inji2} for several examples.
In the following, a positive answer  will be given for a special class of holes.

\begin{definition}
Let $K\ss\C$ be compact and $G$  a hole of $K$. Then $G$ is called a {\rm regular hole}
if $G$ is the only hole of its boundary $\partial G$; that is if 
$\widehat{\partial G}=G\union \partial G=\ov G$.
\end{definition} 

In figure \ref{inji2}, the holes  of $K_1$  and $K_2$ are regular as well as the hole
$G_2$ of $K_3$,  but  $G_1$ is not regular.  A more interesting class of non-regular holes is
provided by Example \ref{1-2-hole}. It has the additional property that $G_1$ is a component
of the interior of a {\it polynomially convex} set $K$.

\begin{example}\label{1-2-hole}
There is a compact set $K\ss\C$  with connected complement such that some  hole $G_1$ of $\partial K$ 
has the property that $G_1$ is not the unique hole of $\partial G_1$.
 \end{example}

 \begin{figure}[h!] 
   \hspace{2cm}
   \scalebox{.40} {\includegraphics{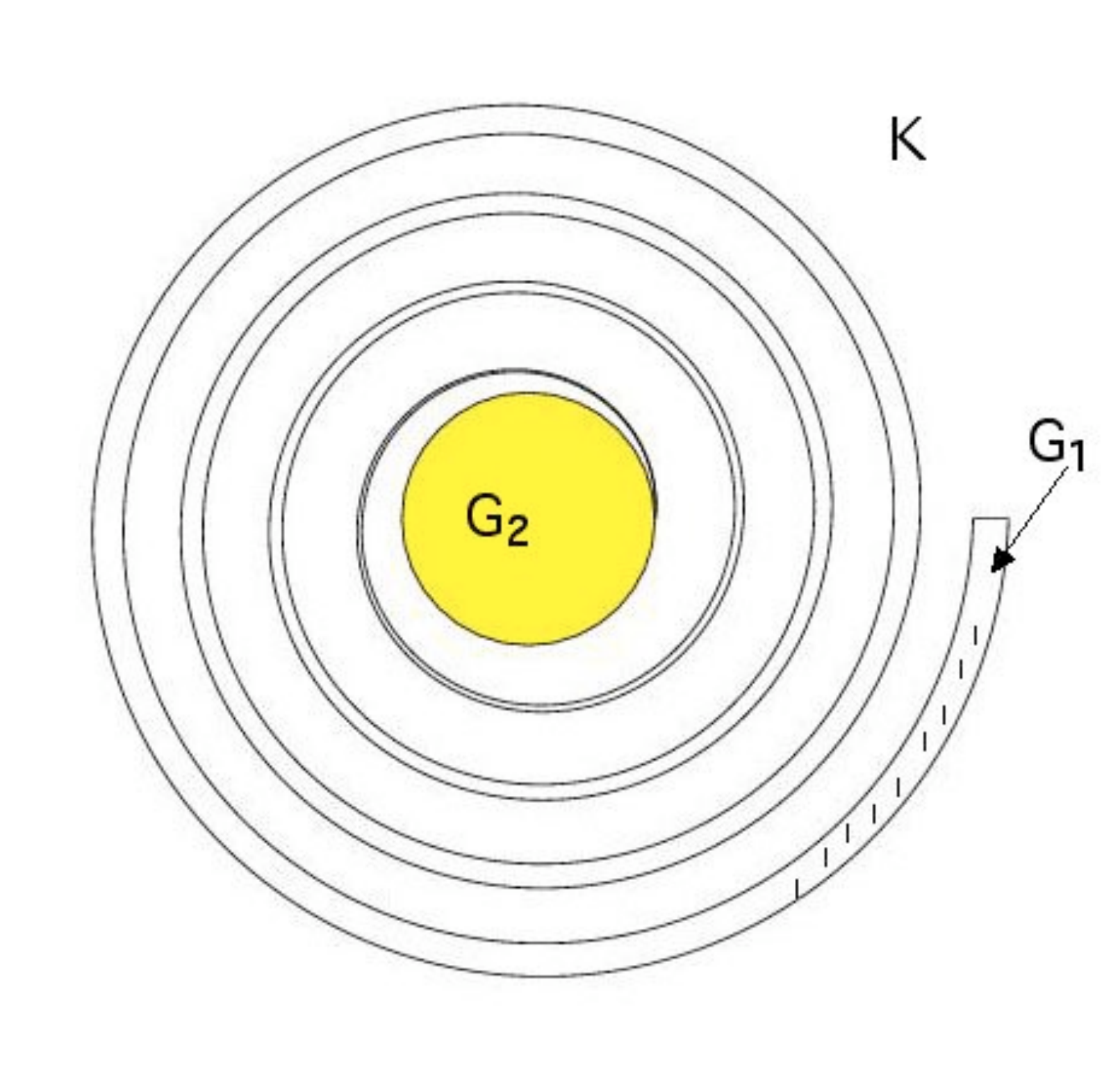}} 
\caption{\label{spiral3} A p.c. compactum with  a boundary
 hole whose boundary induces {\it two} holes}
\end{figure}

\begin{proof}
Let $K$ be the union  of the closed unit disk with a ``thick" spiral $S$ surrounding  the unit circle infinitely
often and clustering  exactly at  every point of $\T$ (see figure \ref{spiral3}). Then $\C\setminus K$ 
is connected, and the holes of $\partial K$ are the components of $K^\circ$; 
these are  the interior $G_1$  of the spiral $S$ and the open unit disk, denoted here by $G_2$. 
Then $\partial G_1=\partial K$; hence $G_1$ and $G_2$ are the holes of  the boundary
of the hole $G_1$ of $\partial K$. 
\end{proof}

This example also shows that the closure $\ov G_1$ of the  component $G_1$ of  the polynomial convex set $K$, may have  a disconnected complement, although $G_1$ itself is simply connected.

 It actually can happen  that two, or even infinitely many, holes of a compactum may
have the {\it same} boundary. These sets are known under the name ``lakes of Wada'', 
 first discovered by L.E.J. Brouwer \cite{br},  see also \cite[p.~138] {gelo}.

\begin{lemma}\label{bounds}
Let $G\ss\C$ be a bounded domain with ${\ov G\,}^\circ =G$ and
$$\widehat{\partial G}=G\union \partial G\; \footnote{ In other words,
$G$ is the only hole of $\partial G$.}.
$$
If $f:\partial G\to \C$ is a continuous injective map, then 
 $f(\partial G)$  is the boundary of a bounded domain $H$ with ${\ov H\,}^\circ =H$ and
 $$\widehat{\partial H}=H\union \partial H.$$
\end{lemma}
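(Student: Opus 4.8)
The plan is to reduce the whole statement to a single topological fact — that $J:=f(\partial G)$ is the common boundary of its two complementary components — and to isolate that as the one genuinely hard point. First I would unpack the hypotheses. Since $\widehat{\partial G}=\ov G=\partial G\union G$, the bounded components (holes) of $\C\setminus\partial G$ have union exactly the connected set $G$; a disjoint union of nonempty open sets cannot be connected unless there is only one of them, so $\partial G$ has precisely one hole, namely $G$, and $\C\setminus\partial G=G\union\Omega_\infty$ with $\Omega_\infty$ the unbounded component. Next, the hypothesis $\ov G^{\,\circ}=G$ forces $\partial G$ to be the \emph{common} boundary of $G$ and $\Omega_\infty$: if some $x\in\partial G$ had a neighbourhood $U$ missing $\Omega_\infty$, then $U\ss\ov G$ would give $x\in\ov G^{\,\circ}=G$, contradicting $x\in\partial G$; hence every boundary point is approached from $\Omega_\infty$, i.e. $\partial G=\partial\Omega_\infty=\partial\ov G$. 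Two \emph{intrinsic} consequences I record for transport through $f$: $\partial G$ has empty interior (an open subset of $\partial G$ would lie in $\ov G^{\,\circ}=G$), and $\partial G$ has no endpoints (a point with a half-open arc neighbourhood does not locally separate the plane and could therefore lie on the boundary of only one component, contradicting common boundary).

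Now pass to the continuum $J=f(\partial G)$, which is homeomorphic to $\partial G$. By Eilenberg's invariance theorem \ref{eilenberg}(4) the number of holes is preserved, so $\C\setminus J$ has exactly one bounded component; call it $H$, and let $\Omega'$ be the unbounded one. Since $\partial G$ has empty interior it has topological dimension $\le1$, and as dimension is a homeomorphism invariant, $J$ has dimension $\le1$, hence empty interior in $\C$. Consequently every point of $J$ is a limit of points of $\C\setminus J$, giving $J=\partial H\union\partial\Omega'$ and the disjoint decomposition $\C=H\union\Omega'\union J$. Writing $A:=\partial H$ and $B:=\partial\Omega'$, the entire content of the Lemma is that $A=B=J$.

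One of the two required identities comes for free. Put $F:=J\setminus A$ (points lying only on $\partial\Omega'$); then $F\ss B$. Each $p\in F$ lies in the open set $\C\setminus\ov H=\Omega'\union F$ and, being a limit of $\Omega'$, lies in the component of $\Omega'$; hence $\C\setminus\ov H$ is connected, $\ov H$ is polynomially convex, and the only hole of $\partial H$ is $H$. Therefore
$$\widehat{\partial H}=\partial H\union H=\ov H=H\union\partial H,$$
which is the second assertion, and it holds whether or not $A=J$. The symmetric computation shows $\C\setminus\ov{\Omega'}=H\union(A\setminus B)$ is connected and equals $\widehat{J}^{\,\circ}=\ov H^{\,\circ}$; thus $\ov H^{\,\circ}=H$ if and only if $A\setminus B=\emp$, while $\partial H=J$ if and only if $F=\emp$. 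So both remaining assertions reduce to showing that $J$ has no one-sided points, i.e. that $J$ is the common boundary of $H$ and $\Omega'$.

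The hard part is exactly this last step, and it is the main obstacle, because ``being the common boundary of one's complementary domains'' is a property of the embedding, whereas $f$ is defined only on $\partial G$ and need not extend to a neighbourhood in $\C$. The route I would take is to show that the offending sets cannot occur: a nonempty relatively open set of one-sided points (say $F\neq\emp$) would, after following arcs in $J$, produce either a free endpoint of $J$ or a subcontinuum enclosing a further bounded complementary component. The first is excluded because $\partial G$, being a common boundary, has no endpoints, and this is preserved by the homeomorphism $f$; the second is excluded because, by \ref{eilenberg}(4), $J$ has exactly one hole. Equivalently, one shows that the property ``$X$ equals its own outer boundary $\partial\widehat X$'' — which for $\partial G$ is precisely the hypothesis $\ov G^{\,\circ}=G$ — is inherited by $J$. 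Granting $F=A\setminus B=\emp$, the computations above give $\partial H=J$, $\ov H^{\,\circ}=H$ and $\widehat{\partial H}=\ov H$, so that $f(\partial G)$ bounds the required domain $H$; making the dichotomy (endpoint or extra hole) rigorous for arbitrary, possibly wild, continua is where a plane-topology invariance result must be invoked.
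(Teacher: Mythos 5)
Your reduction is sound as far as it goes: the decomposition $\C\setminus J=H\cup\Omega'$, the observation that $J=f(\partial G)$ has empty interior and exactly one hole, and the ``free'' verification that $\C\setminus\ov H=\Omega'\cup(J\setminus\partial H)$ is connected, whence $\widehat{\partial H}=\partial H\cup H$, are all correct. But that disposes of only one of the three conclusions. The two substantive ones --- $\partial H=J$ (so that $f(\partial G)$ really \emph{is} the boundary of $H$) and $\ov H^{\,\circ}=H$ --- are exactly what you reduce to the statement that $J$ is the common boundary of $H$ and $\Omega'$, and at that point you stop proving and start gesturing. The proposed dichotomy (``a one-sided point produces a free endpoint or an extra hole'') is not an argument: $\partial G$ can be a wild continuum (e.g.\ indecomposable), so ``following arcs in $J$'' is not available, ``endpoint'' is not defined intrinsically for such sets, and you yourself concede that making this rigorous requires an unnamed plane-topology invariance result. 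That concession is the gap; the lemma is not proved.

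The paper closes precisely this hole by a quite different and entirely elementary device: a \emph{second} application of Eilenberg's invariance theorem, this time to a proper subset. Suppose $S:=\partial\ov H$ were a proper closed subset of $E:=f(\partial G)$, and pull it back to $F:=f^{-1}(S)\subsetneq\partial G$. Pick $\xi\in\partial G\setminus F$ and a disk $D=D(\xi,\e)$ missing $F$. The hypotheses on $G$ enter exactly here: $\widehat{\partial G}=\ov G$ makes $\C\setminus\ov G$ connected, and $\ov G^{\,\circ}=G$ gives $\partial G=\partial\ov G$, so $D$ meets both $G$ and $\C\setminus\ov G$; hence $U:=G\cup(\C\setminus\ov G)\cup D$ is an unbounded connected open subset of $\C\setminus F$ whose complement in $\C\setminus F$ lies in the interior-free set $\partial G\setminus F$. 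Therefore $F$ has no holes, while $S=\partial\ov H$ has at least one (the component of $\C\setminus S$ containing $H$), contradicting the fact that the homeomorphism $f|_F$ preserves the number of holes. This yields $\partial\ov H=\partial H=E$ directly, and then $\ov H^{\,\circ}=\ov H\setminus\partial\ov H=H$ falls out. If you want to complete your write-up, replacing your dichotomy sketch by this pullback-and-count argument is the missing idea.
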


\begin{proof}
By Theorem  \ref{eilenberg}(4), 
$E:=f(\partial G)$ has a single hole, too. Let us denote this hole by $H$. Since $\partial H\ss \partial E$, we have 
\begin{equation}\label{ehe}
\widehat E=E\union H=E\union \ov H.
\end{equation}.
 Note  that $\partial \ov H\ss \partial H\ss E$.
We claim that $\partial \ov H=E$. Suppose, to the contrary, that $S:=\partial\ov H\subset E$, 
the inclusion being strict. Let $F:=f^{-1}(S)$. Then $F$ is a proper, closed  subset of $\partial G$.
Since $\partial G\setminus F$ is relatively open in the closed set  $\partial G$, 
there is  $\xi\in \partial G$
and a disk $D=D(\xi,\e)$  such that $D\inter F=\emp$. Let
$$U:=G\union (\C\setminus \ov G) \union D.$$ 
By hypothesis,  $\widehat{\partial G}=\ov G$. Hence $\C\setminus \ov G$ is connected 
(because it coincides with the unbounded complementary component of the polynomially convex set $\widehat{\partial G}$). 

Because the hypothesis ${\ov G\,}^\circ=G$ implies that $\partial G=\partial\ov G$, 
we conclude that
$D$ meets $G$ as well as  $\C\setminus \ov G$. Hence,
  $U$ is an unbounded open connected set
contained in the open set $\C\setminus F$.  Thus $U$ is contained in
 the unbounded component of $\C\setminus F$.
Since the remaining part $(\C\setminus F) \setminus U\ss \partial G\setminus F$
of $\C\setminus F$ is small in the sense that it does not contain interior points,
$\C\setminus F$ does not have a bounded component.  In other words, 
$F$ has no holes. This is a contradiction, because $F$ has the same number of holes as  $S$;
 that is at least one hole.
Thus we have shown that $\partial \ov H=\partial H=E$. The identity 
$\widehat E= E\union H$ (see \zit{ehe}) now implies that
 $\widehat {\partial H} = \partial H\union H$.
\end{proof}

\begin{theorem}\label{dpforpk}
Let $K\ss\C$ be compact and suppose that $f\in P(K)$  is injective. 
If  $G$ is  a hole of   the outer boundary $S_\infty$ of $K$,  
 then the restriction $f^*|_{\ov G}$ of the Gelfand transform $f^*$ of $f$ to $\ov G$
 is injective whenever $G$ is the only hole of $\partial G$.
\end{theorem}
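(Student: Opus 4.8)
The plan is to run Step~1 of the proof of Theorem~\ref{maintheo} \emph{locally}, using Lemma~\ref{bounds} to manufacture the target hole in place of the global hypothesis that $f$ carries the outer boundary onto the outer boundary. First I would fix the geometry. Since $\widehat K$ is polynomially convex, $\C\setminus\widehat K$ is connected and $S_\infty=\partial\widehat K\ss\partial K\ss K$; hence $f|_{\partial G}$ is well defined, continuous and injective, and $f=f^*$ there. As $G$ is a bounded component of $\C\setminus\partial\widehat K$, we have $G\ss\widehat K^\circ$, so $f^*$ is holomorphic on $G$. Applying Proposition~\ref{outerboundary} to $\widehat K$ shows that $G$ is a bounded domain with $\partial\ov G=\partial G$ and $\ov G^\circ=G$, so $\ov G$ is a compactum to which Theorem~\ref{rouchhomo} applies with $(\ov G)^\circ=G$ and $f^*\in A(\ov G)$. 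Regularity of $G$ means $\widehat{\partial G}=\ov G$, so Lemma~\ref{bounds} applies to $f|_{\partial G}$ and produces a bounded domain $H$ with $\partial H=E:=f(\partial G)$, $\ov H^\circ=H$ and $\widehat{\partial H}=\ov H$; in particular $\widehat E=E\union H$ has $H$ as its only hole. After a harmless translation I would assume $0\in G$, hence $0\notin\partial G$.

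Next I would localize the image. For $a$ in the unbounded component of $\C\setminus E$, Eilenberg's Theorem~\ref{eilenberg}(2) gives $f-a=e^{L}$ on $\partial G$, so $f-a$ is homotopic in $C(\partial G,\C^*)$ to the constant $1$; since $f^*-a\in A(\ov G)$ is zero-free on $\partial\ov G=\partial G$, Theorem~\ref{rouchhomo} yields $n_{\ssc\ov G}(f^*-a)=0$, i.e.\ $a\notin f^*(G)$. Thus $f^*(G)\ss\widehat E=E\union H$. Because $f^*$ is holomorphic and nonconstant on $G$ (it is injective on $\partial G$, which is infinite) it is an open map there; if some $z\in G$ had $f^*(z)\in E=\partial H$, then $f^*(G)$ would meet the unbounded component of $\C\setminus E$, contradicting the previous line. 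Hence $f^*(G)\ss H$.

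Then comes the counting. Put $g:=(f|_{\partial G})^{-1}:E\to\partial G$, which is zero-free since $0\in G$. Fix $b\in H$. As $H$ is the only hole of $E$, Theorem~\ref{eilenberg}(1) gives $s\in\Z$ and $L\in C(E,\C)$ with $g(w)=(w-b)^{s}e^{L(w)}$; substituting $w=f(z)$, $z\in\partial G$, gives $z=(f(z)-b)^{s}e^{L(f(z))}$, so $z\mapsto(f(z)-b)^{s}$ is homotopic in $C(\partial G,\C^*)$ to the coordinate function $z\mapsto z$ via $H(z,t)=(f(z)-b)^{s}e^{tL(f(z))}$. Now $\psi:=(f^*-b)^{s}\in M(\ov G)$ is zero-free on $\partial G$, while the coordinate function has a single zero at $0\in G$ and no poles, so Theorem~\ref{rouchhomo} gives $n_{\ssc\ov G}(\psi)-p_{\ssc\ov G}(\psi)=1$. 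Writing $N:=n_{\ssc\ov G}(f^*-b)$ (with $p_{\ssc\ov G}(f^*-b)=0$), this reads $sN=1$, forcing $s=N=1$. Hence every $b\in H$ is attained exactly once by $f^*$ on $G$; together with $f^*(G)\ss H$ this makes $f^*\colon G\to H$ a bijection, and since $f^*(\partial G)=E=\partial H$ is disjoint from $H$ while $f|_{\partial G}$ is injective, $f^*$ is injective on all of $\ov G$ (a homeomorphism of $\ov G$ onto $\ov H$).

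The main obstacle is the final counting step, exactly as in Theorem~\ref{maintheo}: extracting from Eilenberg's representation the homotopy comparing $(f^*-b)^{s}$ with the coordinate function, and reading off $s=N=1$ from the homotopic Rouch\'e theorem. The preparatory subtlety is recognizing that Lemma~\ref{bounds} supplies the regular target hole $H$ \emph{locally}, so that no global hypothesis on $f$ is needed, and that Proposition~\ref{outerboundary} applied to $\widehat K$ yields $\ov G^\circ=G$, which is precisely what makes $\ov G$ an admissible compactum for Theorem~\ref{rouchhomo}.
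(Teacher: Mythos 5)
Your argument is correct and takes essentially the same route as the paper: both use Proposition \ref{outerboundary} and Lemma \ref{bounds} to show that $f$ carries the outer boundary $\partial G$ of the polynomially convex set $\ov G$ onto the outer boundary of $f(\partial G)$, which is the hypothesis of Theorem \ref{maintheo} for the compactum $\partial G$. The only difference is one of packaging: the paper then simply invokes Theorem \ref{maintheo}, whereas you re-derive the relevant portion (Step 1 of its proof, simplified to the single hole $H$) inline.
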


Example  \ref{1-2-hole} shows that the strange condition
``whenever $G$ is the only hole of $\partial G$'' is not always satisfied. 

\begin{proof}
Because $G$  is  the only hole of $\partial G$, we have
$\widehat{\partial G}=G\union \partial G=\ov G$.  Thus  $M:=\ov G$   is polynomially convex.
Hence,  the outer boundary of $M$ coincides with $\partial M=\partial\ov G$.
Moreover, 
since  $G$ is a hole of the boundary $S_\infty$ of the polynomially convex set
$ \widehat {S_\infty}$, we obtain from  Proposition \ref{outerboundary} 
that $\partial G=\partial \ov G$ and that ${\ov G\,}^\circ =G$.  

Since $\partial M$ has a single hole, namely, $G={\ov G\,}^\circ$,  and since $f$ is injective on 
$\partial M$,  $E:=f(\partial M)$ has a single  hole, too.
Let $H$ be that hole.    By Lemma \ref{bounds},  $\widehat{\partial H}=H\union \partial H$ and
$\partial H=\partial \ov H=E$. 
We conclude that $f$ maps the outer boundary $\partial M$ of $M$  onto the outer boundary 
$E$ of $\widehat{f(\partial M)}$.
By Theorem \ref{maintheo}, $f^*$ is injective on $M=\ov G$. 
\end{proof}

Example \ref{noinjext} shows that, in general,  $f^*$ is not injective on the union of two bounded  components $G_j$ of $\C\setminus S_\infty$.   However, we don't know whether $f^*|_G$
is injective in case $G$ is not a regular hole of $S_\infty$. 

\begin{corollary}\label{3extensions}
Let $X\ss \C$ be compact  and $H$ a hole of $X$.   Suppose that  $f\in P(X)$ is injective. 
Under each of the following conditions $f^*$ is injective on $\ov H$:
\begin{enumerate}
\item [(1)]  $(\partial\ov H, f)$ satisfies the condition of Theorem \ref{maintheo} 
with $K=\partial\ov H$.
\item [(2)]  $H$ is contained in a hole $G$ of the outer boundary of $X$  which has the property
that $G$ is the only hole of $\partial G$.
\item [(3)] $H$ is  a regular hole of $X$.
\end{enumerate}
\end{corollary}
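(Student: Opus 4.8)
The plan is to deduce all three cases from the two results already established, Theorem \ref{maintheo} and its consequence Theorem \ref{dpforpk}, by a restriction argument together with one elementary \emph{gluing} observation. For any hole $H$ of $X$ the Gelfand transform $f^*$ agrees on $\ov H$ with the Gelfand transform $g:=(f|_{\partial\ov H})^*$ of the restriction of $f$ to $\partial\ov H$. Indeed, since $|p|$ attains its maximum over a compact set on the boundary for every polynomial $p$, we have $\widehat{\partial\ov H}=\widehat{\ov H}\supseteq\ov H$, so $g$ is defined on $\ov H$; the difference $f^*-g$ is holomorphic on $\ov H^\circ$ (as $\ov H^\circ\ss\widehat X^\circ$ and $\ov H^\circ=\widehat{\partial\ov H}^\circ$), continuous on $\ov H$, and vanishes on $\partial\ov H\supseteq\partial(\ov H^\circ)$; the maximum principle forces $f^*=g$ on $\ov H$. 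With this in hand, case (2) is immediate: here $H\ss G$ for a hole $G$ of the outer boundary $S_\infty$ that is the only hole of $\partial G$, so Theorem \ref{dpforpk} (with $K=X$) gives that $f^*|_{\ov G}$ is injective, and restricting to $\ov H\ss\ov G$ finishes this case.

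Case (1) is handled by applying Theorem \ref{maintheo} to the compactum $K:=\partial\ov H\ss X$. Since $f$ is injective on $X$, the restriction $f|_K$ is an injective element of $P(K)$, and hypothesis (1) is precisely the condition of Theorem \ref{maintheo} for this $K$. Hence $(f|_K)^*$ is injective on $\widehat K=\widehat{\partial\ov H}=\widehat{\ov H}\supseteq\ov H$, and by the gluing observation $f^*=(f|_K)^*$ on $\ov H$, so $f^*|_{\ov H}$ is injective.

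For case (3) I would reduce to case (1). Regularity of $H$ means $\widehat{\partial H}=\ov H$, so $\ov H$ is polynomially convex and $K:=\partial\ov H$ satisfies $\widehat K=\ov H$ and $\partial\widehat K=\partial\ov H=K$; thus the outer boundary of $K$ is $K$ itself. To check condition (1) it then remains to show $f(\partial\ov H)=\partial\widehat{f(\partial\ov H)}$, i.e.\ that $f$ maps $K$ onto the outer boundary of $f(K)$. This is the content of Lemma \ref{bounds}, applied to the domain $\Omega:=\ov H^\circ$ with the injective map $f|_{\partial\Omega}$: it yields that $f(\partial\ov H)$ is the boundary of a bounded domain $H'$ with $\partial\ov{H'}=\partial H'=f(\partial\ov H)$ and $\widehat{\partial H'}=\ov{H'}$, whence $\widehat{f(\partial\ov H)}=\ov{H'}$ has boundary exactly $f(\partial\ov H)$. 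Condition (1) holds, and the first part gives injectivity of $f^*$ on $\ov H$.

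The step I expect to be the main obstacle is the application of Lemma \ref{bounds} in case (3): the lemma requires $\Omega$ to be a bounded domain with $\ov\Omega^\circ=\Omega$, $\widehat{\partial\Omega}=\ov\Omega$, and $\Omega$ its only hole. The slit disk $H=\D\setminus[0,\tfrac12]$ (with $X=\T\union[0,\tfrac12]$) shows that $\ov H^\circ$ may \emph{properly} contain $H$, so one cannot simply take $\Omega=H$; the genuine work is to verify that the regularized interior $\Omega=\ov H^\circ$ is connected and satisfies $\ov\Omega=\ov H$ (hence $\partial\Omega=\partial\ov H$ and $\widehat{\partial\Omega}=\ov\Omega$). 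I would extract this from the single-hole hypothesis on $\partial H$ by invoking Proposition \ref{outerboundary} for the polynomially convex set $\ov H$, which gives $\ov\Omega^\circ=\Omega$, $\partial\ov\Omega=\partial\ov H$, and connectedness of each complementary hole, thereby matching the single hole of $\partial H$ to a single hole of $\partial\ov H$. An alternative route, bypassing Lemma \ref{bounds} entirely, is to prove directly that a regular hole of $X$ is always contained in a regular hole $G$ of $S_\infty$ (as happens in the slit example, where $G=\D$), and then invoke case (2).
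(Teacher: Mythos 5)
Your overall structure is sound and, for parts (1) and (2), matches what the paper intends (the paper simply declares these ``clear''); your explicit gluing observation identifying $f^*|_{\ov H}$ with $(f|_{\partial\ov H})^*$ is a worthwhile addition, though the claimed equality $\ov H^\circ=\widehat{\partial\ov H}^{\,\circ}$ is false in general (e.g.\ when $\ov H$ is a closed annulus, $\widehat{\partial \ov H}=\widehat{\ov H}$ is strictly larger than $\ov H$); only the inclusion $\ov H^\circ\ss \widehat{\partial\ov H}^{\,\circ}$ holds, and that is all you actually use. For part (3) you unwind Theorem \ref{dpforpk} into Lemma \ref{bounds} plus Theorem \ref{maintheo}; the paper instead shows that $G:=\ov H^{\,\circ}$ is the unique hole of $\partial\ov H=\partial G$ and then cites Theorem \ref{dpforpk} directly. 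Since Theorem \ref{dpforpk} is itself proved from Lemma \ref{bounds} and Theorem \ref{maintheo}, this is the same mathematics.

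The one genuine gap sits exactly at the point you flag as the main obstacle: the connectedness of $\Omega=\ov H^{\,\circ}$ (equivalently, that $\partial\ov H$ has a single hole). Proposition \ref{outerboundary} cannot deliver this: it describes properties of each individual bounded component of $\C\setminus\partial\ov H$ (simple connectedness, $\partial\ov G=\partial G$, ${\ov G\,}^\circ=G$) but says nothing about how many such components there are, and your phrase ``matching the single hole of $\partial H$ to a single hole of $\partial\ov H$'' is not an argument --- the holes of $\partial H$ and those of $\partial\ov H$ live in different complements. The fix is the one-line sandwich argument the paper uses: $H\ss\ov H^{\,\circ}\ss\ov H=\ov{H}$, and a set squeezed between a connected set and its closure is connected; equivalently, $H$ is dense in $\ov H$, so every component of the open set $\ov H^{\,\circ}$ meets the connected set $H$, forcing $\ov H^{\,\circ}$ to be a single component. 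With that inserted, $\ov\Omega=\ov H$, $\partial\Omega=\partial\ov H$ and $\widehat{\partial\Omega}=\Omega\union\partial\Omega$ all follow, Lemma \ref{bounds} applies, and your reduction of (3) to (1) goes through. (Your proposed ``alternative route'' --- that a regular hole of $X$ always sits inside a regular hole of $S_\infty$ --- is left unproven and should not be relied on.)
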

\begin{proof}
(1) and (2) are clear.

(3) Let $M=\ov H$.  By hypothesis,  $\widehat {\partial H}=H\union \partial H$. Thus
$M$ is polynomially convex.  Since $H\ss {\ov H}^\circ\ss \ov H$, we conclude from the connectedness of $H$ that $G:={\ov H\,}^\circ$ is connected.  Hence $G$ is the only
hole of $\partial \ov H$. Since $\partial\ov H$ is the outer boundary of $\ov H$,  it follows that  $\partial \ov H=\partial G$ and  $\ov G=\ov H$. In particular, 
$\widehat{\partial G}=\partial G\union G$.
By Theorem  \ref{dpforpk}, $f^*|_{\ov G}$ is injective. 
\end{proof}
\begin{corollary}
Let $K\ss\C$ be  compact. Suppose that $\C\setminus K$ and $K^\circ$ are connected. 
Then $\partial K$ has the extension property.
 \end{corollary}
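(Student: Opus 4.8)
The plan is to view $\partial K$ itself as the compactum to which the earlier machinery applies, and to reduce the statement to the single–hole, regular–hole situation already settled in Theorem \ref{dpforpk} (equivalently Corollary \ref{3extensions}(3)). First I would unwind the hypotheses. Since $\C\setminus K$ is connected, $K$ is polynomially convex and $\partial K=\partial\widehat K$. The bounded components of $\C\setminus\partial K=K^\circ\union(\C\setminus K)$ are exactly the components of $K^\circ$, the set $\C\setminus K$ being the unbounded one; as $K^\circ$ is connected it is the only bounded component. Hence $\partial K$ has a single hole $G:=K^\circ$, and
$$\widehat{\partial K}=\partial K\union K^\circ=K,\qquad S_\infty(\partial K)=\partial\widehat{\partial K}=\partial K,$$
so that $G$ is the unique hole of the outer boundary $S_\infty$ of the compactum $\partial K$.

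Next I would record that $G$ is a regular hole. Proposition \ref{outerboundary}, applied to $K$ (whose complement is connected), gives that $G$ is simply connected, that $\partial\ov G=\partial G$ and that ${\ov G\,}^\circ=G$. The holes of $\partial G=\partial K^\circ$ are the bounded components of $\C\setminus\partial G=G\union(\C\setminus\ov G)$; using the simple connectivity of $G$ one checks that $\C\setminus\ov G$ is connected, whence $G$ is the only hole of $\partial G$ and $\widehat{\partial G}=\ov G$. Thus $K^\circ$ is a regular hole of $\partial K$, so that Theorem \ref{dpforpk} (or Corollary \ref{3extensions}(3)), with the compactum taken to be $\partial K$ and the injective function the given $f\in P(\partial K)$, yields that $f^*|_{\ov G}$ is injective. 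An alternative, giving the same conclusion, is to feed $\partial G$ into Lemma \ref{bounds} and then apply Theorem \ref{maintheo}: $f$ carries the outer boundary $\partial G$ of $\ov G$ onto the outer boundary of $\widehat{f(\partial G)}$, which is exactly the hypothesis of Theorem \ref{maintheo}.

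To finish I would identify $f^*|_{\ov G}$ with $f^*$ on the whole of $\widehat{\partial K}$. This is immediate once one knows $\ov G=\widehat{\partial K}$, i.e. $\ov{K^\circ}=K$: then the injectivity of $f^*$ on $\ov G$ established above \emph{is} the injectivity of $f^*$ on $\widehat{\partial K}$, which is precisely the extension property for $\partial K$.

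The delicate point, and the step I expect to be the genuine obstacle, is exactly the identity $\widehat{\partial K}=\ov{K^\circ}$, equivalently $\partial K=\partial K^\circ$. The two connectivity hypotheses guarantee that $K^\circ$ is the unique and regular hole of $\partial K$ and hence that $f^*$ is injective on the ``essential'' part $\ov{K^\circ}$; what must still be excluded is that $\partial K$ carries lower–dimensional appendages disjoint from $\partial K^\circ$, on which $f^*=f$ is merely continuous and could a priori be folded into the hole $f^*(G)$. Once it is shown that $K$ coincides with the closure of its interior (so that the full outer boundary is accounted for by $\partial K^\circ$, and Theorem \ref{maintheo}'s criterion $f(S_\infty)=\partial\widehat{f(\partial K)}$ holds), the passage from injectivity on $\ov{K^\circ}$ to injectivity on all of $\widehat{\partial K}$ is automatic; this is where the hypotheses, via Proposition \ref{outerboundary} and the connectedness of $\C\setminus\ov{K^\circ}$, must be brought fully to bear.
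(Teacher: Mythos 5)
Your route is the paper's: show that $K^\circ$ is the unique hole of $\partial K$, that it is regular, and invoke Theorem \ref{dpforpk} (equivalently Corollary \ref{3extensions}(3)) to conclude that $f^*$ is injective on $\ov{K^\circ}$. One local repair is needed in that reduction: you justify the connectedness of $\C\setminus \ov G$ (i.e.\ the regularity of $G=K^\circ$) by the simple connectivity of $G$, but that implication is false in general --- the paper's Example \ref{1-2-hole} exhibits a simply connected component $G_1$ of the interior of a polynomially convex set with $\C\setminus\ov{G_1}$ disconnected. The argument that works, and the one the paper actually uses, exploits the polynomial convexity of $K$: a hole $H$ of $\ov{K^\circ}$ would satisfy $H\ss\widehat{\ov{K^\circ}}\ss\widehat K=K$, hence $H\ss K^\circ$, contradicting $H\inter \ov{K^\circ}=\emp$.

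The step you leave open at the end --- the identity $\widehat{\partial K}=\ov{K^\circ}$, equivalently $K=\ov{K^\circ}$ --- is a genuine gap, and you are right to single it out as the crux: the hypotheses do not imply it, and without it the conclusion fails. Take $K=\ov\D\union[1,2]$; then $\C\setminus K$ and $K^\circ=\D$ are connected, but $\ov{K^\circ}=\ov\D$ is a proper subset of $K=\widehat{\partial K}$. The ``folding'' you worry about actually occurs: the function given by $f(z)=z$ on $\T$ and $f(t)=2-t$ on $[1,2]$ is a well-defined injective element of $P(\partial K)$ (it extends to a member of $A(K)=P(K)$ by Mergelyan), yet $f^*(3/2)=1/2=f^*(1/2)$, so $f^*$ is not injective on $\widehat{\partial K}$. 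Thus what your argument establishes --- and, it must be said, all that the paper's own proof establishes, since it likewise ends with the appeal to Corollary \ref{3extensions} and never returns to the part of $\partial K$ outside $\partial K^\circ$ --- is that $f^*$ is injective on $\ov{K^\circ}$. The corollary as stated requires the additional hypothesis $K=\ov{K^\circ}$, or else its conclusion must be weakened to injectivity of $f^*$ on $\ov{K^\circ}$.
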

 \begin{proof}
 If $K^\circ=\emp$, then the polynomial convexity of $K$ implies that $\widehat K=K=\partial K$.
 Hence the assertion is trivial. So let us assume that $K^\circ\not=\emp$.
 Let $M=\ov{K^\circ}$.  We claim that   $M$ is polynomially convex. In fact,
 $$\ov{K^\circ}\ss \widehat{\ov{K^\circ}}\ss \widehat K =K.$$
If $ \ov{K^\circ}$ would be a strict subset of  $\widehat{\ov{K^\circ}}$, then 
 $\ov{K^\circ}$ would have a hole $H$. Hence
 $$\ov{K^\circ}\union H\ss \widehat{\ov {K^\circ}}\ss K.$$
 Consequently, $K^\circ \union H\ss K^\circ$; this is an obvious contradiction. We conclude that
 $$\widehat{\partial K^\circ}=\widehat{\ov{K^\circ}}=\ov{K^\circ}=K^\circ\union \partial K^\circ.$$
 Thus  $K^\circ$
 is  a regular hole for $\partial M$.  The conclusion now  follows from Corollary \ref{3extensions}.
  \end{proof}
  
 Examples \ref{exo2} and   \ref{noinjext} (this latter for the full disks) 
  show that neither of the  conditions $\C\setminus K$ connected or 
  $K^\circ$ connected implies 
 that $\partial K$ has the extension property.

 Now let $K\ss\C$ be  a compact set for which $\partial K$ has the extension property
  (for $P(K)$-functions).
 If $f\in R(K)$ is injective on $\partial K$, does this imply that $f$ is injective on $K$?
 The following example shows that this is not necessarily the case:

\begin{example}

Let $K=\{z\in \C: r\leq |z|\leq R\}$ where $0<r<1<R$ and $rR\not=1$.  Then the function  $f$, given by 
$f(z)=z+\frac{1}{z}$ belongs to $R(K)$, is injective on $\partial K$, but not on $K$. In fact, 
$f(z)=f(w)$ implies that $z-w = (w-z)/ zw$. Since  on $\partial K$,  $zw\not=1$, we have $z=w$.
On the other hand, $f(i)=f(-i)=0$. 
\end{example}

Finally, we want to present the following poblem: suppose that  $f\in C(\partial K,\C)$
is injective. Under which conditions $f$ admits a continuous injective extension to $K$
or even $\C$? Note that if $K$ is the closure of a Jordan domain, then the Schoenflies theorem
guarantees the existence of a homeomorphism of $\C$ extending $f$.

\section{Continuous logarithms on compact sets containing the origin on their boundary}\label{loags}

Eilenberg's Theorem \ref{eilenberg}(2)  shows that if $0$ belongs to the
unbounded complementary component of  a compact set $K$ in $\C$,
 then there exists a continuous branch
of the logarithm of $z$ on $K$. On the other hand,  by \ref{eilenberg}(3),  if $0$
belongs to a bounded complementary component of $K$, 
then there does not exist a continuous function $h$
on $K$ such that $e^{h(z)}=z$ for every $z\in K$. We will investigate now the case
when $0$ belongs to the boundary of $K$.  Does there exist a continuous branch  of $\log z$
on $K\setminus \{0\}$?   The answer is ``not necessarily" \footnote{This refutes statements and invalidates the associated ``proofs'' in \cite[p. 62]{st} and its verbatim copy in \cite[p. 348]{kpc}}.

\begin{proposition}\label{pclog}
 There exists a compact set $K$ in $\C$ with    $0\in \partial K$ and connected complement such that
no continuous branch of $\log z$ can be defined on $K\setminus \{0\}$.
\end{proposition}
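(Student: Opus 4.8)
The plan is to realize $K$ as the closure of a single arc that winds around the origin infinitely often while simultaneously clustering onto a point $\ell\neq 0$ — a ``logarithmic spiral'' avatar of the topologist's sine curve. The whole point is that such a set can wind \emph{essentially} around $0$ and yet have connected complement, the winding being ``closed up'' only at the deleted point.

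\textbf{Construction.} I would take a curve $\gamma\colon[0,\infty)\to\C^*$ written in polar form $\gamma(\theta)=\rho(\theta)\,e^{i\theta}$, so that $\theta$ is the cumulative angle, with $\rho(\theta)>0$ chosen as follows: on the bulk of the $k$-th turn $\theta\in[2\pi k,2\pi(k+1)]$ one lets $\rho(\theta)<\e_k$ with $\e_k\to 0$, so that $\gamma$ spirals inward toward $0$; but at the instants $\theta=2\pi k$, where $\gamma$ crosses the positive real axis, one prescribes $\rho(2\pi k)=r_k$ with $r_k\to\tfrac12$ and $r_k\neq\tfrac12$. Arrange $\gamma$ to be injective and to stay in $\ov{D(0,1)}$, and set $K:=\ov{\gamma([0,\infty))}$. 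Then $K$ is compact, $\tfrac12\in K$ (limit of the excursions) and $0\in K$ (limit of the inward dips); moreover every punctured neighbourhood of $0$ meets $\C\setminus K$ through the gaps between consecutive turns, so $0\in\partial K$.

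Two topological facts must be verified. First, $\C\setminus K$ is connected: here I would argue that $K$ is acyclic, i.e.\ $\check{H}^1(K)=0$. Indeed $K$ is a tree-like continuum (a spiral core with a comb of radial ``hairs'') containing no simple closed curve and enclosing no bounded region — the core is an arc, exactly as for an ordinary logarithmic spiral, and the excursions form a comb whose tips accumulate at $\tfrac12$; neither piece separates the plane. By Alexander duality, $\check{H}^1(K)=0$ forces $\C\setminus K$ to be connected (equivalently, one checks directly that each point of $\C\setminus K$ reaches $\infty$ by running out through the gaps between the turns). Second, and this is the main point, $z$ admits no continuous logarithm on $K\setminus\{0\}$. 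Suppose to the contrary that $h\in C(K\setminus\{0\})$ satisfies $e^{h}=z$, and put $u:=\operatorname{Im}h$, a continuous real branch of $\arg z$ on $K\setminus\{0\}$. On the connected arc $\gamma([0,\infty))$ the function $u$ is a continuous branch of the argument along a path, whence $e^{i\,u(\gamma(\theta))}=e^{i\theta}$ gives $u(\gamma(\theta))=\theta+c$ for a constant $c$; in particular $u(\gamma(2\pi k))=2\pi k+c\to\infty$. But $\gamma(2\pi k)=r_k\to\tfrac12\in K\setminus\{0\}$, so continuity of $u$ at $\tfrac12$ would force $u(\gamma(2\pi k))\to u(\tfrac12)<\infty$ — a contradiction.

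It is worth stressing why this does not clash with Eilenberg's Theorem \ref{eilenberg}(2): that result produces a logarithm of $z$ on every \emph{compact} subset of $K\setminus\{0\}$ (no such subset can surround $0$, since $\C\setminus K$ is connected and $0\in\partial K$), and the obstruction lives entirely in the non-compactness of $K\setminus\{0\}$. The step I expect to be the genuine obstacle is exactly the simultaneous fulfilment of the two requirements — the winding must be essential, so that $u$ is forced to run off to $+\infty$ along $\gamma$, while $K$ must not separate the plane. Making the verification ``$\C\setminus K$ is connected'' fully rigorous, by controlling the excursions so that the comb of hairs together with the spiral core encloses no bounded complementary component, is where the careful work lies.
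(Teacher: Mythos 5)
Your mechanism for ruling out a continuous logarithm is exactly the right one, and it is the same as the paper's: along a connected set on which the cumulative argument is forced to run to $+\infty$, pick points that converge to a point of $K\setminus\{0\}$ and contradict continuity there. The step $u(\gamma(\theta))=\theta+c$ is correct. The genuine gap is the construction of $K$ itself, and it is not merely the connectedness issue you flag at the end; there is an injectivity obstruction you do not address. Write $A_j:=\gamma\bigl((2\pi j,2\pi(j+1))\bigr)$. Each $A_j$ is a crosscut of the slit plane $\Omega=\C\setminus[0,\infty)$ joining the prime end $r_j^{+}$ (upper edge) to $r_{j+1}^{-}$ (lower edge), and it meets the negative real axis at the single point $-s_j$ with $s_j<\e_j$. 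The crosscut $A_j$ splits $\Omega$ into a component $V_j^0$ whose boundary contains the prime end $0$ (hence the segment $(-s_j,0)$) and a component $V_j^\infty$ whose boundary contains the prime ends $x^{+}$ with $x>r_j$. For $k>j$ with $\e_k<s_j$, the connected set $A_k$ contains $-s_k\in V_j^0$; if $r_k>r_j$ it also has points in $V_j^\infty$ near its endpoint, so $A_k$ must cross $A_j$, destroying injectivity. Hence your construction forces $r_k<r_j$ for all large $k>j$, i.e.\ \emph{every} $r_j$ must exceed $\tfrac12$ and the excursions must overshoot and decrease to $\tfrac12$ from above. With $r_k\nearrow\tfrac12$ (the most natural reading of ``$r_k\to\tfrac12$, $r_k\ne\tfrac12$, staying in $\ov{D(0,1)}$''), the curve you describe does not exist. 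Even after this repair, the verification that $\C\setminus K$ is connected — which you yourself identify as ``where the careful work lies'' — is the substance of the proposition and is left undone; the hand-wave via ``tree-like continuum'' is not a proof, since the comb of excursions crosses every inter-turn channel near the positive real axis and the limit segment $[0,\tfrac12]$ lies in $K$, so one must really check that no chamber gets sealed off.

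The paper avoids all of this with a much simpler set: take the closed disk $E=\{|z+1|\le 1\}$ (so $0\in\partial E$) together with a spiral $z(t)=-1+\bigl(1+\frac{1}{1+t}\bigr)e^{it}$ that winds around $E$ from outside and clusters on all of $\partial E$, and put $K=E\cup S$. Connectedness of $\C\setminus K$ is immediate (the spiral stays outside $E$ and shrinks onto $\partial E$, so $K$ is polynomially convex), $\arg z$ is unbounded along $S$ because $S$ surrounds the disk containing $0$ in its boundary, and $S$ clusters at points of $\partial E\setminus\{0\}\ss K\setminus\{0\}$. I would either adopt that construction or, if you want to keep yours, commit to $r_k\downarrow\tfrac12$, prove the turns can be nested disjointly, and give a genuine argument (e.g.\ an explicit path to $\infty$ from each complementary point through the gaps $(r_{k+1},r_k)$ on the real axis) that the complement is connected.
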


\begin{proof}

Let $E$ be the disk $\{z\in\C: |z+1|\leq 1\}$ and $S$ a spiral
starting at 1 and surrounding $E$ infinitely often and clustering at every
point on the boundary of $E$; for example one may describe $S$ as the half-open curve 
$$z(t)= -1+\left(1+ \frac{1}{1+t}\right)e^{it},\sp 0\leq t<\infty.$$

Let $K=E\union S$. Then $K$ is compact and polynomially convex.  Note also that
$\ov S\inter E=\partial E$.
Moreover, $0$ is a boundary point of $K$. We show that there does not
exist a continuous branch of $\log z$ on $K\setminus \{0\}$.\bigskip

 \begin{figure}[h] 
   \hspace{2cm}
   \scalebox{.40} {\includegraphics{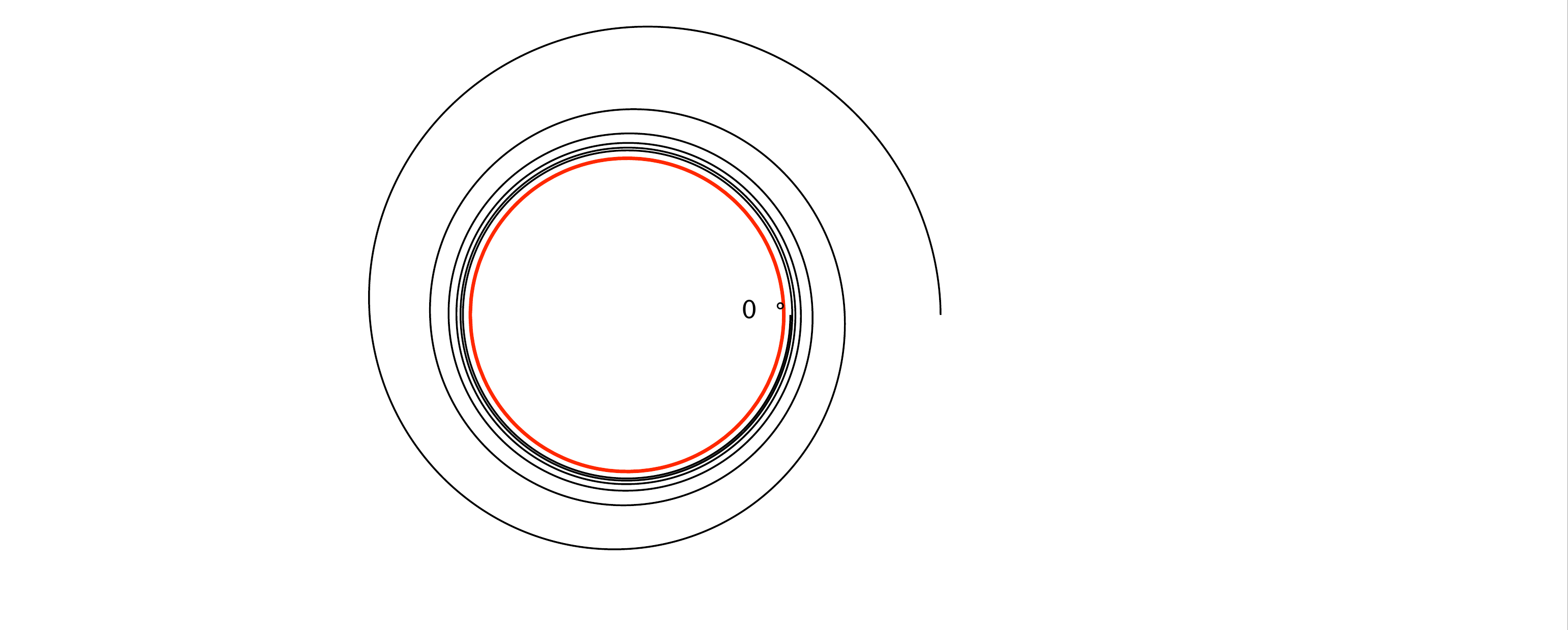}} 
\caption{\label{spiral1} A spiral clustering at a circle}
\end{figure}

In fact, since $S$ is a connected set surrounding $0$ infinitely often,
any continuous determination of the argument of $z$ when $z$ runs through
the spiral $S$ has to be unbounded. This can be seen by geometric intuition or by the following
analytic argument:

If we look at 
$w(t):=\exp(-it)z(t)=1+1/(1+t)-\exp(-it)$, $0\leq t<\infty$,  then ${\rm Re}\, w(t)\geq 1/(1+t)>0$.
Hence $w(t)$ belongs to the right halfplane. Let $L(z)=\log z$ be the principal branch
of the logarithm on the right half-plane and set $h(t):=L(w(t))$. Then
$$\exp(-it)z(t)=\exp(h(t)).$$
Therefore, $z(t)=\exp(it +h(t))$. Because  $|{\rm Im}\; h(t)| \leq \pi/2$,
$$\arg  z(t)={\rm Im}\,  (it+h(t))$$  behaves as $t$ for large $t$. 
Thus the imaginary part of $\log z$ is unbounded,  for $z\in S$. 

Since the spiral $S$ clusters at every point of the circle $C:=\{|z+1|=1\}$ and $C\ss \ov S\ss K$,
$\log z$ cannot be continuous on $K\setminus\{0\}$. 
\end{proof}

Next we give a sufficient condition   for the existence of such logarithms.

\begin{definition}
 A boundary point $z_0$ of a compact set $K$ is said to be {\it accessible},
if there is a Jordan arc   $\gamma: \;]0,1[ \to \C\setminus K$ comming from 
infinity and ending at $z_0$ (that is $\lim_{t\to 0}\gamma(t)=\infty$ and 
$\lim_{t\to 1}\gamma(t)=z_0$).
\end{definition}
We note that it is well known that the set of accessible boundary points for $K$
 is dense in the boundary $\partial K$ of $K$.

\begin{theorem}
 Let $K$ be a compact set in $\C$ and suppose that $0\in \partial K$.
If $0$ is an accessible boundary point, then there is a continuous branch
of $\log z$ on $K\setminus \{0\}$.
\end{theorem}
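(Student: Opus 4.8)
The plan is to use the accessibility arc itself as a branch cut that makes $\log z$ single-valued, and then to restrict the resulting branch to $K\setminus\{0\}$. Throughout I would work on the Riemann sphere $\hat\C=\C\cup\{\infty\}$. Let $\gamma:\,]0,1[\to\C\setminus K$ be the accessibility arc, put $\Gamma:=\gamma(]0,1[)\cup\{0\}$, and set $\hat\Gamma:=\Gamma\cup\{\infty\}$. The first thing I would verify is that $\hat\Gamma$ is a genuine Jordan arc in $\hat\C$. To that end, extend $\gamma$ to $\bar\gamma:[0,1]\to\hat\C$ by $\bar\gamma(0)=\infty$ and $\bar\gamma(1)=0$; the two limit conditions in the definition of accessibility make $\bar\gamma$ continuous at the endpoints, and $\bar\gamma$ is injective because $\gamma$ is injective on $]0,1[$, because $\infty$ is not a finite value, and because $0\notin\gamma(]0,1[)$ (indeed $\gamma$ maps into $\C\setminus K$ while $0\in K$). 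A continuous injection of the compact set $[0,1]$ into the Hausdorff space $\hat\C$ is a homeomorphism onto its image, so $\hat\Gamma$ is a compact Jordan arc and $\Omega:=\hat\C\setminus\hat\Gamma$ is open in $\hat\C$.

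Next I would pin down the domain carrying the branch. Since $\gamma$ avoids $K$ and $0\notin K\setminus\{0\}$, we have $(K\setminus\{0\})\cap\Gamma=\emptyset$, hence $K\setminus\{0\}\subseteq\C\setminus\Gamma$. Because $\infty\in\hat\Gamma$, one has $\C\setminus\Gamma=\hat\C\setminus\hat\Gamma=\Omega$, so in particular $\Omega\subseteq\C$ and $K\setminus\{0\}\subseteq\Omega$. I would then invoke two classical topological facts about arcs in the sphere: a Jordan arc does not separate $\hat\C$, and a region in $\hat\C$ is simply connected exactly when its complement in $\hat\C$ is connected. Since $\hat\Gamma$ is a connected arc, the first fact gives that $\Omega$ is connected, and the second then gives that $\Omega$ is simply connected. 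As $\Omega$ omits both $0$ and $\infty$, it is a simply connected domain contained in $\C^*=\C\setminus\{0\}$.

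To finish, I would produce the branch on $\Omega$ and restrict. On the simply connected domain $\Omega\subseteq\C^*$ the function $1/z$ is holomorphic and therefore has a holomorphic primitive; after adjusting the additive constant at one base point this yields a holomorphic function $\ell:\Omega\to\C$ with $e^{\ell(z)}=z$ for all $z\in\Omega$, i.e.\ a single-valued branch of $\log z$. Restricting $\ell$ to $K\setminus\{0\}\subseteq\Omega$ gives a continuous function on $K\setminus\{0\}$ satisfying $e^{\ell(z)}=z$, which is precisely the desired continuous branch of $\log z$. Note that no compactness of $K\setminus\{0\}$ is needed, which is why I would not try to route this through Eilenberg's Theorem \ref{eilenberg}(2): the latter is stated for compact sets, whereas $K\setminus\{0\}$ fails to be closed.

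The only genuinely nontrivial inputs are the two topological facts about arcs in the sphere (non-separation of $\hat\C$ by an arc, and the characterization of simply connected regions by connectedness of the complement), and everything else is routine. I expect the conceptual heart to be the simple connectivity of $\Omega$, while the only step demanding real care is checking that $\hat\Gamma$ is a bona fide Jordan arc—in particular its injectivity and closedness at the two ideal endpoints $0$ and $\infty$.
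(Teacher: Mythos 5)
Your proposal is correct and follows essentially the same route as the paper: delete the closed accessibility arc from the plane, observe that the resulting domain is simply connected and omits $0$, take a holomorphic branch of $\log z$ there, and restrict to $K\setminus\{0\}$. The only difference is that you spell out, via the one-point compactification and the non-separation theorem for arcs in $\hat\C$, the simple connectivity of $\Omega$ that the paper asserts without proof.
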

\begin{proof}
 Let $J=\gamma (]0,1[)$ be a Jordan arc in the complement of $K$,
 joining $\infty$ with $0$; in particular,  $\lim_{t\to 0}\gamma(t)=\infty$ and 
$\lim_{t\to 1}\gamma(t)=0$. Note that $\ov J=J\union \{0\}$.
 Then $\Omega:=\C\setminus \ov J$ is a simply connected domain in $\C$ with
$0\notin \Omega$. Hence
there is a holomorphic branch of $\log z$ in $\Omega$.
Because $K\setminus \{0\}\ss \Omega$, we have obtained the desired logarithm.
\end{proof}

For example if $K$ is the union of $\{0\}$ with the spiral 
parametrized by $$z(t)=\left\{\frac{1}{1+t}e^{it}: 0\leq t<\infty\right\},$$
then $0$ is an accessible boundary point of $K=\partial K$ and $\log z(t)=it -\log(1+t)$
is a continuous branch of the logarithm on $K\setminus\{0\}$.

 \begin{figure}[h] 
   \hspace{1cm}
   \scalebox{.30} {\includegraphics{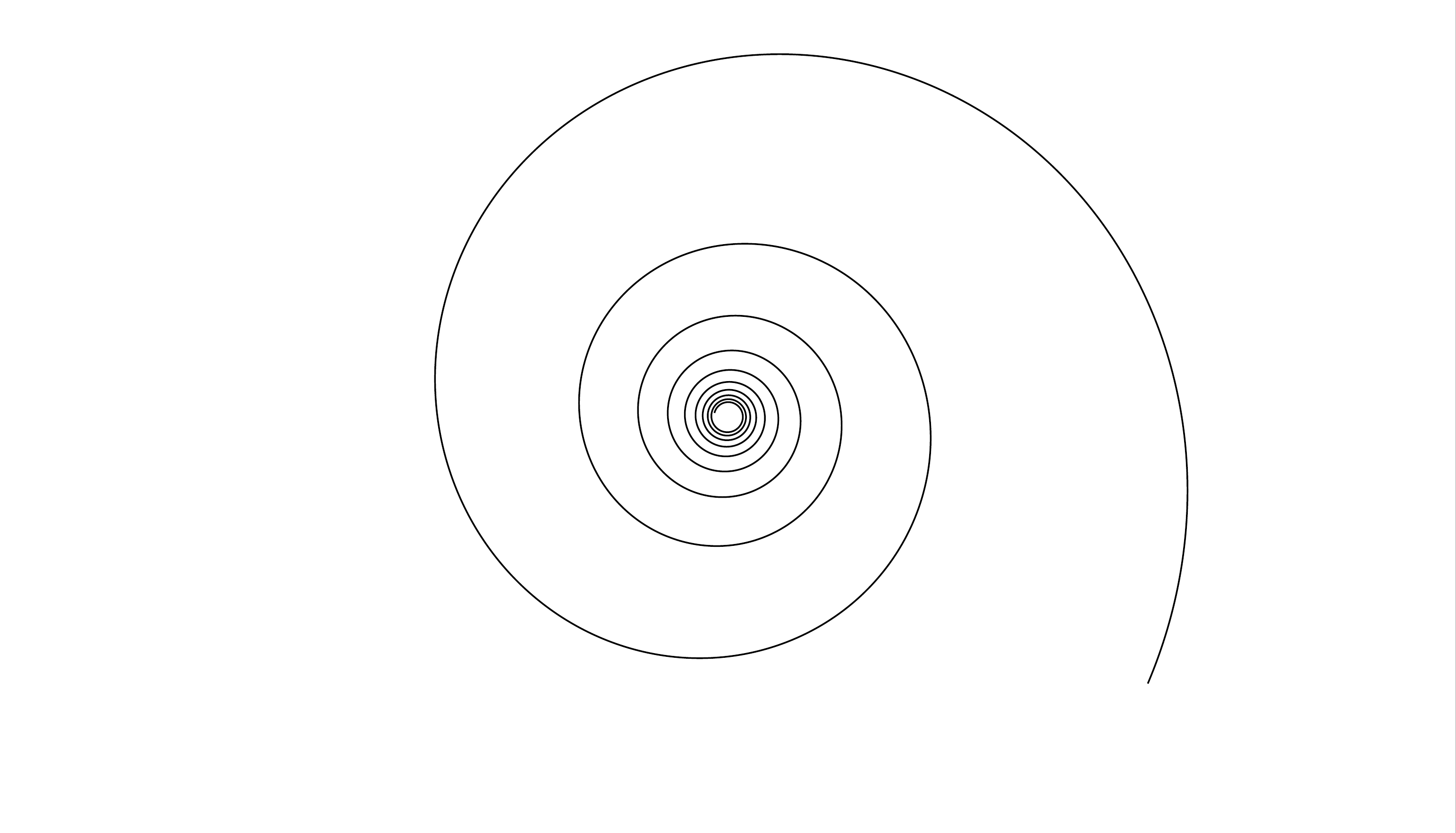}} 
\caption{\label{spiral2} A spiral ending at the origin}
\end{figure}

It is not known at present, whether accessibility characterizes the compact sets under discussion.
\bigskip

\bigskip

{\bf Acknowledgements}

I thank Robert Burckel for his comments on section \ref{loags}, 
Lee Stout for some helpful comments on a preliminary 
version of this work and for reference \cite{br},
 Rainer Br\"uck for reference  \cite{pom} in connection with Theorem
\ref{dbp} and J\'er\^ome No\"el for drawing figure \ref{spiral3}.
I also thank Rudolf Rupp for his contribution to Theorems \ref{ck} and \ref{ak}.

 \end{document}